\newfont{\sheaf}{eusm10 scaled\magstep1}
\newtheorem{thm}{Theorem}[section]
\newtheorem{lemma}[thm]{Lemma}
\newtheorem{proposition}[thm]{Proposition}
\theoremstyle{definition}
\newtheorem{remark}[thm]{Remark}
\newtheorem{definition}[thm]{Definition}
\DeclareMathOperator{\Supp}{Supp}
\DeclareMathOperator{\Sing}{Sing}
\DeclareMathOperator{\Aut}{Aut}
\DeclareMathOperator{\Hom}{Hom}
\DeclareMathOperator{\Pic}{Pic}
\def\c1{\operatorname{c_1}}
\def\c2{\operatorname{c_2}}
\def\Sym{\operatorname{Sym}}
\def\c{\mathfrak{P}}
\def\PP{{\mathbb P}}
\def\cong{\simeq}
\def\+{\oplus}                   
\def\*{\otimes}                  
\def\Aut{\operatorname{Aut}}
\def\Hom{\operatorname{Hom}}
\def\Pic{\operatorname{Pic}}
\def\Supp{\operatorname{Supp}}
\def\Supp{\operatorname{Supp}}
\def\det{\operatorname{det}}
\def\Sing{\operatorname{Sing}}
\def\Prym{\operatorname{Prym}}
\title[ ]{  Edge and Fano on Nets of Quadrics}
\author{ Alessandro Verra}
\address{A. Verra \\  Dipartimento di Matematica e Fisica \\ Universit\'a Roma Tre \\ Italy }
 \email{sandro.verra@gmail.com}
\thanks{Partially supported by PRIN Project \it Geometry of Algebraic Varieties \rm and by INdAM-GNSAGA  }
\subjclass{14H40, 14H30}
\begin{document}



\maketitle
\begin{abstract} 
{In a number of papers by Edge, and in a related paper by Fano, several properties are discussed about the family of scrolls $R \subset \mathbb P^3$ of degree 8 whose plane sections are
projected bicanonical models of a genus $3$ curve $C$. This beautiful classical subject is implicitely related to the moduli of semistable rank two vector bundles on $C$ with bicanonical determinant. In this paper such a matter is reconstructed in modern terms from the modular point of view. In particular, the stratification of the family of scrolls $R$ by $\Sing R$ is considered and  the cases where $R$
has multiplicity $\geq 3$ along a curve are described.}
\end{abstract}
\section{Introduction}
In a series of five papers, sharing the name \it Notes on a Net of Quadric Surfaces\rm, Edge was gathering in a unique scientific work the fundamentals
of the theory of nets of complex projective quadric surfaces and of their classification, from the point of view of Invariant Theory.  These papers, 
appeared between 1937 and 1942, go far beyond the basic properties and, somehow, interestingly relate to modern theory of vector bundles on
curves and their associated moduli spaces. \par 
A standard projective invariant of a general net $N$ of quadrics of $\mathbb P^{d-1}$ is its discriminant, that is the degree $d$ plane curve 
$C \subset N$ parametrizing singular quadrics. As is well known, the locus in $\mathbb P^d$ of the singular points of the quadrics parametrized by $C$ is a 
smooth model of $C$ embedded by $\omega_C \otimes \theta$, where $\theta$ is a non effective theta characteristic on $C$. Moreover $N$ can
be uniquely reconstructed from the pair $(C, \theta)$ up to projective equivalence. This follows from a classical  result of A. C. Dixon on the representations of 
the equation of a plane curve as a  determinant of a symmetric matrix of linear forms,  see \cite{Di} and \cite{D}  p. 187. \par
In his \it Notes \rm Edge studies net of quadrics in $\mathbb P^3$, therefore he deals with plane quartics $C$ and their non effective theta
characteristics. The most beautiful geometry behind a net of quadric surfaces, and its connections to the theory of rank two vector bundles on
the discriminant quartic $C$,  is specially revealed in his paper III: \it The Scroll of Trisecants of the Jacobian curve, \cite{E-III}. \rm 
This paper studies the embedding of $C$ in $\mathbb P^3$ defined by the line bundle $\omega_C \otimes \theta$ and, in particular,  the scroll $R_{C,\theta}$,
union of the trisecant lines to $C$. \par $R_{C, \theta}$ is a classical object of study from several points of view.  For instance it is related to theta characteristics on plane quartics, to the Scorza map
and to bilinear Cremona transformations, cfr. \cite{D} chapters 5, 6 and 7. However, to proceed with order and motivate the title of our note, let us continue by saying that Fano proved to be an attentive  reader
of Edge's \it Notes\rm. In particular he wrote \it Su alcuni lavori di W. L. Edge \rm where, not without some criticism,  $R_{C, \theta}$ and nets of quadrics
are considered as very special cases of a more general situation, \cite{F}. \par 
His point of view is that $R_{C,\theta}$ should be considered as a special member of a wider family of octic scrolls $R$ in $\mathbb P^3$, which deserves to be investigated. This is the family of the 
scrolls $R$ in $\mathbb P^3$ whose smooth minimal model is the projectivization of a rank two vector bundle $E$ over $C$ with bicanonical determinant.  \par     In these pages we discuss the content of
Fano's paper and its special connections to nets of quadrics and Edge's work.    Of course never the word 'vector bundle' appears in the writings of both Edge and Fano. However Fano's point of view and their study of octic scrolls $R$ should be considered as a picture  'ante litteram' of the moduli space of semistable vector bundles of rank two and bicanonical determinant on a smooth non hyperelliptic curve $C$ of genus $3$. Actually this picture admits several  generalizations nowadays  in the literature, cfr. \cite{BV, HH, NR, Ty}. \par
What makes the difference between a general $R$ and $R_{C, \theta}$? How to describe the specializations of $\Sing R$? And how to recover from $R_{C,\theta}$ a net quadrics?
We are going to follow closely the geometric ideas of Edge and Fano, in order to describe their beautiful answers in modern terms. We partially rebuild and prove by modern tools
Fano's comments and results as follows. \par  Let $E$ be a globally generated, stable rank two vector bundle with bicanonical determinant on a non hyperelliptic curve $C$ of genus $3$ and let $\overline E$ be its dual span, that is the dual of the  Kernel of the evaluation $H^0(E) \otimes \mathcal O_C \to E$. Let $R \subset \mathbb P^3$ be the scroll defined by the tautological map of $E$: 
\begin{thm}  Let the Jacobian of $C$ be of Picard number one. Then a component of $\Sing R$ has multiplicity $m \geq 3$ iff one of the following  conditions holds.
 \medskip \par \underline {$m = 3$} \it One has $\overline E \cong E \cong T_{\mathbb P^2 \vert C} \otimes \theta$, $\theta$  is a non effective theta characteristic. $R$ is the scroll of trisecant lines to $\Sing R$ and this is
  $C$ embedded in $\mathbb P^3$ by $\vert \omega_C \otimes \theta \vert$. 
\medskip \par  \underline{$m = 4$} $\overline E$ is strictly semistable. $\Sing R$ is a rational normal cubic in $\mathbb P^3$. Moreover each line in $R$ is bisecant to $\Sing R$.

\end{thm}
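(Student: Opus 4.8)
The plan is to read $\Sing R$ off the dual span bundle $\overline E$ by turning multiplicity into the vanishing of sections. Dualizing the evaluation sequence $0\to\overline E^{\vee}\to H^0(E)\otimes\mathcal{O}_C\to E\to 0$ (exact since $E$ is locally free) gives $0\to E^{\vee}\to H^0(E)^{\vee}\otimes\mathcal{O}_C\to\overline E\to 0$, so that $H^0(\overline E)\cong H^0(E)^{\vee}$ and $\overline E$ is globally generated. I would use this to identify $\mathbb{P}^3=\mathbb{P}(H^0(E)^{\vee})$ with $\mathbb{P}(H^0(\overline E))$. Under this identification a point $x=[\phi]$ lies on the ruling line $\ell_p$ precisely when the section $\phi\in H^0(\overline E)$ vanishes at $p$; hence the multiplicity of $x$ on $R$ equals the degree of the zero divisor $Z(\phi)$. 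Since a nonzero section vanishing on an effective divisor $D$ factors through the saturated sub-line-bundle $\mathcal{O}_C(D)\hookrightarrow\overline E$, a point of multiplicity $m$ corresponds to a section factoring through a degree-$m$ sub-line-bundle, and a one-dimensional component of $\Sing R$ of multiplicity $m$ corresponds to a one-parameter family of degree-$m$ sub-line-bundles of $\overline E$ carrying sections.

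Next I would bound $m$ and split into cases. As the dual span of a stable, globally generated bundle, $\overline E$ should be semistable (Butler-type stability of kernel bundles), so every sub-line-bundle has degree at most $\mu(\overline E)=4$ and $m\le 4$. Because $\deg\overline E=8$, the Segre invariant $s(\overline E)=8-2d_{\max}$ is even and at most $g=3$, so $s\in\{0,2\}$: stability forces $s=2$ (maximal sub-line-bundles of degree $3$), while $s=0$ (degree $4$) holds exactly when $\overline E$ is strictly semistable. A Brill--Noether count (the locus in $\Pic^3(C)$ of $L$ with $\Hom(L,\overline E)\ne 0$ has expected dimension $0$) shows that for general $\overline E$ the degree-$3$ sub-line-bundles with a section are finite in number, so a general scroll carries only isolated triple points and its one-dimensional singular locus is an ordinary double curve. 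A curve of multiplicity $3$ or $4$ is therefore an excess phenomenon, forcing the relevant family of sub-line-bundles to jump in dimension; the Picard-number-one hypothesis enters precisely to guarantee that such a jump can come only from the structural degeneracies below and to pin down the line bundles involved, ruling out spurious correspondences on $C$.

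To identify the two cases I would combine the sub-line-bundle family with the node count of a general plane section: a plane section of $R$ is a plane octic of arithmetic genus $\binom{7}{2}=21$ and geometric genus $3$, hence carries $\delta=18$ double points distributed along $\Sing R$. In the stable case the excess one-parameter family $\{L_t\}$ of degree-$3$ sub-line-bundles (each with $h^0(L_t)\ge 1$ since $\chi(L_t)=1$) sweeps a curve $\Gamma=\Sing R$ of multiplicity $3$; from $18=\binom{3}{2}\cdot\deg\Gamma$ one gets $\deg\Gamma=6$ and identifies $\Gamma$ with $C$ embedded by $\vert\omega_C\otimes\theta\vert$ for a theta characteristic $\theta$. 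The symmetry of this configuration, through Dixon's symmetric determinantal representation and the associated net of quadrics, then yields the self-duality $\overline E\cong E\cong T_{\mathbb{P}^2\vert C}\otimes\theta$ with $\theta$ non-effective, and exhibits $R=R_{C,\theta}$ as the scroll of trisecants to $\Gamma$. In the strictly semistable case a destabilizing sequence $0\to N\to\overline E\to N'\to 0$ has $\deg N=\deg N'=4$ and $NN'=\omega_C^{2}$; the existence of a moving degree-$4$ sub-line-bundle forces $N\cong N'$ (any second maximal sub maps isomorphically to $N'$), so the degree-$4$ sub-line-bundles form a $\mathbb{P}^1$, with $N$ pinned down by Picard number one. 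Their sections trace a rational curve $\Gamma=\Sing R$, which $18=\binom{4}{2}\cdot\deg\Gamma$ forces to be a cubic, hence a rational normal cubic, and the $(4,2)$ incidence between ruling and $\Gamma$ shows each line of $R$ is bisecant to $\Gamma$.

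I expect the main obstacle to be the "only if" direction of the case $m=3$: showing that a one-dimensional multiplicity-$3$ locus forces the entire symmetric structure $\overline E\cong E\cong T_{\mathbb{P}^2\vert C}\otimes\theta$, and not merely the existence of some degree-$6$ curve. This is exactly where Picard number one and Dixon's theorem become indispensable, and where the excess-dimension analysis of the family of maximal sub-line-bundles of $\overline E$, together with the self-duality forced by the symmetric determinantal representation, must be carried out carefully.
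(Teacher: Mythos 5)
Your setup is sound and in fact runs parallel to the paper's: identifying a point $o\in\mathbb P^3=\mathbb P H^0(\overline E)$ with a section $\overline s$ of $\overline E$ whose zero divisor records the rulings through $o$ is exactly the paper's dictionary via the plane $P_o\subset G$ with $P_o\cdot C=x_1+\dots+x_k$, and the bound $m\le 4$ with the dichotomy ($\overline E$ stable $\Rightarrow m\le 3$; $m=4$ $\Rightarrow$ $\overline E$ strictly semistable) is the same stability argument the paper uses. The genuine gap is in the $m=3$ case, at precisely the step you flag as "the main obstacle" and then do not carry out. From $18=3\cdot\deg\Gamma$ you get $\deg\Gamma=6$ (and even this presupposes that no residual double curve survives), but a degree count cannot identify $\Gamma$ with $C$ embedded by $\vert\omega_C\otimes\theta\vert$, nor produce the isomorphism $\overline E\cong E\cong T_{\mathbb P^2\vert C}\otimes\theta$. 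Appealing to Dixon's theorem here is close to circular: the symmetric determinantal representation, and hence the net of quadrics, is what one recovers \emph{after} the bundle has been identified, not a tool for identifying it.

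What the paper actually does at this point, and what is missing from your proposal, is the following chain. One defines $a:\Gamma_3\to\Pic^3(C)$ sending a triple point $o$ to $\mathcal O_C(p_*n^*o)$ (in your language, the degree-$3$ zero divisor of the corresponding section). A careful analysis of the contractions of $B_x\to\Gamma_x$ shows that $a(\Gamma_3)\cdot(x+\Theta)$ has length at most $3$ for general $x$; the hypothesis that the Jacobian has Picard number one forces the class of $a(\Gamma_3)$ to lie in $\mathbb Z[\Theta^2/2]$, so the intersection number is exactly $3$, and Matsusaka's criterion then gives $a(\Gamma_3)=\tau+C$ for some $\tau\in\Pic^2(C)$, which is shown to be non-effective. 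The isomorphism with the twisted tangent bundle is then obtained not from $\Gamma$ but from theta divisors: one computes via the Euler sequence that $T_\tau=\lbrace\tau(x-y)\rbrace$ is the $2\Theta$-divisor of $E_\tau:=T_{\mathbb P^2\vert C}\otimes\tau^{-1}$ and that $T_\tau\cdot\Theta=B$, so $[E_\tau]$ lies in the (two-point) fibre of the theta map over $B$ and must equal $[E]$ or $[\overline E]$; comparing determinants gives $\tau^{\otimes 2}\cong\omega_C$, and the $\iota$-invariance of $[E_\tau]$ gives $E\cong\overline E$. None of this is replaced by anything in your argument, so as written the "only if" direction of the $m=3$ case is unproved. (In the $m=4$ case your degree count likewise only gives a rational cubic, not a rational \emph{normal} cubic; the paper excludes the plane cubic by exhibiting $\Sing R$ as the vertex curve of the Veronese surface $V\subset G$, i.e.\ as the skew cubic whose bisecants $V$ parametrizes.)
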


\section{Quadrics through the bicanonical curve $C$ in $\mathbb P^5$}
Fano's point of view is well represented by the title of this section. To describe it let us consider the bicanonical embedding $C \subset \mathbb P^5$ of a smooth plane quartic and its ideal sheaf
$\mathcal I_{C \vert \mathbb P^5}$. Then, as is well known, we have a unique embedding 
$$
C \subset V \subset \mathbb P^5
$$
where $V$ is a Veronese surface. In particular the net of conics of $V$ cuts on $C$ the canonical linear system $\vert \omega_C \vert$. Furthermore the ideal of $C$ is generated by quadrics and
$C$ is a quadratic section of $V$. We fix the notation $\mathcal B$ for the moduli space of semistable rank two vector bundles $E \to C$ of bicanonical determinant and $[E]$ for the moduli point of $E$. Let $\mathbb P^6 := \vert \mathcal I_{C \vert \mathbb P^5}(2) \vert$, a natural rational map 
$$
\gamma: \mathcal B \to \mathbb P^6,
$$
generically finite of degree two, is provided by the next construction, see \cite{BV}. Let $[E] \in \mathcal B$ be a stable and general point, then $E$ is globally generated and $h^0(E) = 4$. Moreover the evaluation map $ev$  for global sections defines the exact sequence
$$
0 \to \overline E^* \to H^0(E) \otimes \mathcal O_C \stackrel{ev}  \to E \to 0
$$
so that $\overline E$ is stable. Let $f: \mathbb P \to \mathbb P^3 := \mathbb PH^0(E)^*$ be the tautological map of the projectivization $\mathbb P$
of $E$ and let $R := f(\mathbb P)$. It is also known that, for a general point $[E]$, the rational map $f: \mathbb P \to R$ is a birational morphism.  Notice that
$$
R = \bigcup_{x \in C} R_x
$$
where $R_x := \mathbb PE^*_x$ and $E^*_x \subset H^0(E)^*$ is the linear embedding induced by $ev_x$. Then the next property is a standard conclusion for a general $[E]$.
\begin{lemma} $R$ is an octic scroll in $\mathbb P^3$ whose general plane section is a linear projection of
the bicanonical model of $C$. Moreover $h^0(\mathcal O_R(1)) = h^0(E) = 4$.
\end{lemma}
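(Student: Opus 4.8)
The plan is to reduce both assertions to a single computation: identifying the curve cut on $\mathbb P$ by a general member of $|\mathcal O_{\mathbb P}(1)|$, together with the restriction of $\mathcal O_{\mathbb P}(1)$ to it. Write $\pi:\mathbb P\to C$ for the structure map and $\xi=c_1(\mathcal O_{\mathbb P}(1))$, normalized so that $\pi_*\mathcal O_{\mathbb P}(1)=E$ (which is the convention forcing $R_x=\mathbb P E_x^*$ and $f$ the tautological map). Since for a general $[E]$ the map $f$ is a birational morphism, the degree of $R$ equals $\xi^2$; standard ruled-surface intersection theory (on a curve $c_2(E)=0$, so $\xi^2=\pi^*c_1(E)\cdot\xi$) gives $\xi^2=\deg\det E=\deg\omega_C^{\otimes 2}=8$. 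Thus $R$ is an octic scroll, ruled by the lines $R_x$ by construction.

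For the statement about plane sections, I would take a general hyperplane $H\subset\mathbb P^3$ and set $D:=f^{*}H\in|\mathcal O_{\mathbb P}(1)|$; under $\pi_*\mathcal O_{\mathbb P}(1)=E$ this is the zero divisor $D_s$ of a global section $s\in H^0(E)$. The key point is that a general $s$ is nowhere vanishing: the incidence locus $\{(x,[s]):s(x)=0\}\subset C\times\mathbb P H^0(E)$ has over each $x$ the fibre $\mathbb P(\ker ev_x)\cong\mathbb P^1$ (global generation makes $ev_x$ surjective, so $\ker ev_x$ has dimension $4-2=2$), hence has dimension $2$ and cannot dominate the $3$-dimensional $\mathbb P H^0(E)$. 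So a general $s$ yields a sub-bundle $\mathcal O_C\hookrightarrow E$, the divisor $D$ is an honest section of $\pi$ isomorphic to $C$, and taking determinants in $0\to\mathcal O_C\to E\to E/\mathcal O_C\to 0$ gives $\mathcal O_{\mathbb P}(1)|_D\cong E/\mathcal O_C\cong\det E\cong\omega_C^{\otimes 2}$. Consequently $f|_D\colon C\cong D\to H=\mathbb P^2$ is given by a $3$-dimensional subspace of $H^0(\omega_C^{\otimes 2})$ (which has dimension $6$), i.e. by a linear projection of the bicanonical model $C\subset\mathbb P^5$ to $\mathbb P^2$. This identifies the general plane section of $R$ with a projected bicanonical curve, and incidentally recomputes $\deg R=\deg\omega_C^{\otimes 2}=8$.

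Finally, for $h^0(\mathcal O_R(1))$ I would argue by two inclusions. Because $f$ is defined by a basis of $H^0(E)=H^0(\mathcal O_{\mathbb P}(1))$, the four coordinate sections are linearly independent, so $R$ is nondegenerate in $\mathbb P^3$ and the restriction $H^0(\mathbb P^3,\mathcal O(1))\to H^0(R,\mathcal O_R(1))$ is injective, giving $h^0(\mathcal O_R(1))\ge 4$. Conversely $f^{*}\colon H^0(R,\mathcal O_R(1))\to H^0(\mathbb P,\mathcal O_{\mathbb P}(1))=H^0(E)$ is injective since $f$ is dominant, so $h^0(\mathcal O_R(1))\le h^0(E)=4$, whence equality. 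The step I expect to be the main obstacle is the nowhere-vanishing claim for a general section, together with checking that the resulting $D$ is reduced and irreducible (so that $\mathcal O_{\mathbb P}(1)|_D$ really is $\omega_C^{\otimes 2}$ and not a degeneration carrying fibre components $F_x$); this is exactly where global generation and $h^0(E)=4$ enter, and where the normalization convention for $\mathbb P$ must be handled with care.
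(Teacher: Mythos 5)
Your argument is correct, and it supplies precisely the standard reasoning that the paper omits: the lemma is stated there without proof as ``a standard conclusion for a general $[E]$'', relying on $E$ being globally generated with $h^0(E)=4$ and $f$ birational, exactly the inputs you use. The degree computation $\xi^2=\deg\det E=8$, the identification of a general member of $|\mathcal O_{\mathbb P}(1)|$ with a copy of $C$ carrying $\omega_C^{\otimes 2}$ (via a nowhere-vanishing section, which your dimension count justifies), and the two inclusions giving $h^0(\mathcal O_R(1))=4$ are all sound and are the intended proof.
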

Furthermore  the determinant map $det: \wedge^2 H^0(E) \to H^0(\omega^{\otimes 2}_C)$ is surjective for such an $[E]$. On the other hand let us consider the Pl\"ucker embedding $$ G \subset \mathbb P^5 := \mathbb P(\wedge^2 H^0(E)^*)$$  of the Grassmannian of lines of $\mathbb P^3 = \mathbb PH^0(E)^*$. The next lemma also follows.
\begin{lemma} The bicanonical embedding of $C$ factors through the map sending $x$ to the parameter point of the line $R_x$ and the Pl\"ucker embedding of $G$. \end{lemma}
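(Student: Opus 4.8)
The plan is to recognise the stated map as a Grassmannian classifying morphism and then to compute both the line bundle and the linear series that the Plücker embedding pulls back to $C$. Write $\psi\colon C \to G$ for the morphism $x \mapsto [R_x]$. Since $R_x = \mathbb P E^*_x$ and $E^*_x \subset H^0(E)^*$ is the two-dimensional subspace induced fibrewise by $ev_x$, dualising the evaluation sequence
$$
0 \to \overline E^* \to H^0(E) \otimes \mathcal O_C \stackrel{ev}{\to} E \to 0
$$
produces an inclusion of a rank two subbundle $E^* \hookrightarrow H^0(E)^* \otimes \mathcal O_C$ whose fibre at $x$ is exactly $E^*_x$. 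Thus $\psi$ is the classifying morphism of this subbundle, and the composite $\iota \circ \psi$ with the Plücker embedding $\iota\colon G \hookrightarrow \mathbb P^5 = \mathbb P(\wedge^2 H^0(E)^*)$ sends $x$ to the point $[\wedge^2 E^*_x]$.

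First I would pin down the pulled-back line bundle. The Plücker embedding is defined by the determinant of the tautological subbundle, so $(\iota\circ\psi)^*\mathcal O_{\mathbb P^5}(-1)$ is the line bundle with fibre $\wedge^2 E^*_x = (\det E)^{-1}_x$ at $x$; hence
$$
(\iota\circ\psi)^*\mathcal O_{\mathbb P^5}(1) \cong \det E = \omega^{\otimes 2}_C,
$$
using the hypothesis that $E$ has bicanonical determinant. So $\iota\circ\psi$ is given by a subsystem of the complete bicanonical system $\vert \omega^{\otimes 2}_C \vert$.

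Next I would identify that subsystem. A hyperplane form on $\mathbb P^5$ is an element $\eta \in \wedge^2 H^0(E)$, and its pullback under $\iota\circ\psi$ is the section of $\det E$ whose value at $x$ is $\eta$ evaluated on the line $\wedge^2 E^*_x \subset \wedge^2 H^0(E)^*$, viewed in $(\wedge^2 E^*_x)^\vee = \wedge^2 E_x$. Writing $\eta = s \wedge t$ with $s,t \in H^0(E)$ and pairing against $\xi_1 \wedge \xi_2$ for $\xi_1,\xi_2 \in E^*_x$, this pairing equals $\xi_1(s(x))\,\xi_2(t(x)) - \xi_1(t(x))\,\xi_2(s(x)) = (\xi_1\wedge\xi_2)\bigl(s(x)\wedge t(x)\bigr)$. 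Hence the pulled-back section is $x \mapsto s(x)\wedge t(x)$, which is exactly $\det(\eta) \in H^0(\omega^{\otimes 2}_C)$, so the pullback of hyperplane forms is the determinant map $\det\colon \wedge^2 H^0(E) \to H^0(\omega^{\otimes 2}_C)$. Since that map is surjective and both spaces have dimension $6 = \binom{4}{2} = h^0(\omega^{\otimes 2}_C)$, it is an isomorphism, and the induced identification $\mathbb P(\wedge^2 H^0(E)^*) \cong \mathbb P H^0(\omega^{\otimes 2}_C)^*$ carries $\iota\circ\psi$ onto the complete bicanonical map. Because $C$ is non-hyperelliptic of genus $3$, $\omega^{\otimes 2}_C$ has degree $8 \geq 2g+1$ and is very ample, so this map is the bicanonical embedding, which gives the asserted factorisation.

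The only delicate point is the middle computation, namely matching the Plücker pullback of hyperplane forms with the determinant map $\det\colon \wedge^2 H^0(E) \to H^0(\omega^{\otimes 2}_C)$; once the natural pairing between $\wedge^2 E_x$ and $\wedge^2 E^*_x$ is written out this is immediate, and everything else is the standard dictionary between classifying maps and pulled-back tautological data, combined with the surjectivity of $\det$ already recorded above.
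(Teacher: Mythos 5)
Your proof is correct and follows the same route the paper intends: the paper states this lemma as an immediate consequence of the surjectivity of the determinant map $\wedge^2 H^0(E) \to H^0(\omega_C^{\otimes 2})$ (and of the construction of $G$ from the pairing on $\wedge^2 H^0(E)$ recalled from \cite{BV}), and your argument simply writes out the details — identifying $x \mapsto [R_x]$ as the classifying map of the subbundle $E^* \subset H^0(E)^*\otimes\mathcal O_C$, computing the Pl\"ucker pullback as $\det E = \omega_C^{\otimes 2}$, and matching hyperplane sections with the determinant map, which is an isomorphism by the dimension count $\binom{4}{2}=6=h^0(\omega_C^{\otimes 2})$. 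Nothing is missing.
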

Let $\mathcal E^*$ be the universal bundle of $G$ and let
$$
0 \to \overline{\mathcal E}^*  \to H^0(\mathcal E) \otimes \mathcal O_G \to \mathcal E \to 0
$$
be the exact sequence of the universal and quotient bundles. These vector bundles are uniquely defined, via Serre's construction, by the
two connected components of the Hilbert scheme of planes in the quadric $G$. Moreover one has $\mathcal E \otimes \mathcal O_C \cong E$ and $\overline {\mathcal E} \otimes \mathcal O_C \cong \overline E$. Using this we can finally define the above mentioned map $\gamma$.
\begin{definition} \it $\gamma: \mathcal B \to \mathbb P^6$ is the map sending $[E] \in \mathcal B$ to the quadric $G$. \end{definition}
The fibre of $\gamma$ at $G$ consists of two points, labeled by the spinor bundles $\mathcal E$ and $\overline {\mathcal E}$ of $G$. Fano was of course aware of the relations between
quadrics $G$ through the bicanonical model of $C$ and the corresponding moduli space of octic scrolls $R$ in $\mathbb P^3$ whose minimal desingularization is the projectivization of a \it stable \rm $E$. Edge is aware as well but, in his investigations on net of quadrics, he is only considering those special scrolls which are associated to nets of quadrics. Fano has a list of three cases of scrolls $R$ with special 
properties: (a), (b), (c). Cases (a) and (b) concern special features of $\Sing R$ and (a) involves nets of quadrics. Case (c) is of different nature, though related to  (a) and (b), and concerns L\"uroth quartics. Since our principal interest is here $\Sing R$ we treat only cases (a) and (b). \par 
In order to describe special properties of $R$ it is natural to observe that $\Sing R$ is non empty. One expects a curve of
double points having finitely many triple points, triple for $R$ as well. This is in some sense Fano's starting point. A simple vocabulary translates these properties of $R$ in properties of $G$ with respect to the varieties of bisecant lines and of trisecant planes to $C$. This is useful to understand the special cases and intimately related to the modern description of the theta map for the moduli space $\mathcal B$. We address this matter in the next sections.
\section{The theta map $\gamma: \mathcal B \to \mathbb P^6$ and the Coble quartic}
As is well known one has $\Pic \mathcal B \cong \mathbb Z[\mathcal L]$, where $\mathcal L$ is ample. In the case of a smooth plane quartic we are considering, the linear system $\vert \mathcal L \vert$ defines an embedding
$$
\mathcal B \subset \mathbb P^7.
$$
$\mathcal B$ is a quartic hypersurface which is well known as a \it Coble quartic\rm, \cite{NR}. The map defining this embedding is known as the \it theta map \rm of $\mathcal B$. It is useful for our purposes to describe it as follows. Let $\Theta := C^{[2]}$ be the $2$-symmetric 
product of $C$, embedded in $J^2 := \Pic^2C$ by the Abel map, sending $d \in C^{[2]}$ to $\mathcal O_C(d)$. In other words $\Theta$ is just the natural Theta divisor
$$
\lbrace L \in J^2 \ \vert \ h^0(L) = 1 \rbrace.
$$
In particular $\Theta$ is a \it symmetric \rm Theta divisor, which is invariant by the involution $\iota: J^2 \to J^2$ sending $L$ to  $\omega_C \otimes L^{-1}$. As is well known  a
natural identification $$ \mathbb P^7 := \vert 2\Theta \vert $$ 
can be fixed, so that the embedding of $\mathcal B$ in $\vert 2\Theta \vert$ can be described as follows. Let $[E] \in \mathcal B$ then there exists a unique divisor $\Theta_E \in \vert 2\Theta \vert$ such that
$$
\Supp \Theta_E := \lbrace L \in J^2 \ \vert \ h^0(E \otimes L^{-1}) \geq 1 \rbrace,
$$
we will say that $\Theta_E$ is the theta divisor of $E$. It is well known that
$$
\mathcal B = \lbrace \Theta_E \ \vert \ [E] \in \mathcal B \rbrace.
$$
In particular let $[E_o] := [\omega_C \oplus \omega_C]$. Then $[E_o]$ is a semistable point of $\mathcal B$ and $\Theta_{E_o} = 2\Theta$. Now consider the 
standard exact sequence
$$
0 \to \mathcal O_{J^2}(\Theta) \to \mathcal O_{J^2}(2\Theta) \to \mathcal O_{\Theta}(2\Theta) \to 0.
$$
Passing to the associated long exact sequence, it follows that the restriction map 
$$
\rho: \vert 2\Theta \vert \to \mathbb P^6 := \mathbb PH^0(\mathcal O_{\Theta}(2\Theta))
$$
is the linear projection of center $[E_o]$. We fix the notation $E[k]$ for the standard vector bundle of rank $k$, over the $k$-symmetric product $C^{[k]}$, whose
fibre at $d \in C^{[k]}$ is $E_d := E \otimes \mathcal O_d$. Let us recall that $C$ is assumed to be non hyperelliptic and that $C^{[2]}$ is identified to the theta divisor $\Theta \subset J^2$ via the Abel map.
Then let us consider the map of vector bundles
$$
ev^{[2]}: H^0(E) \otimes \mathcal O_{C^{[2]}} \to E[2]
$$
defined by the evaluation of global sections. It is well known that the degeneracy scheme of $ev^{[2]}$ is the restriction of $\Theta_E$ to $\Theta$.
Moreover such a restriction is a curve if $[E] \neq [E_o]$. Assuming this we fix the following
\begin{definition} \it The degeneracy curve $B_E$ of $E$ is the restriction of $\Theta_E$ to $\Theta$. \end{definition}
Putting for simplicity $B_E = B$ we finally consider the correspondence
$$
I := \lbrace (d, z) \in \Theta \times \mathbb P^5 \ \vert \ z \in \ell_d \rbrace,
$$
where $\ell_d$ is the line spanned by the degree $2$ effective divisor $d \subset C \subset \mathbb P^5$. Let
$$
\begin{CD}
{\Theta}Ê@<{\alpha}<<Ê{I}Ê@>{\beta}>> {\mathbb P^5} \\
\end{CD}
$$
be the projection maps of $I$ and let $G$ be the smooth quadric defined as above by the vector bundle $E$. Then the next property is satisfied, \cite{BV} 4.22.
 \begin{lemma} $G$ is the unique quadric of $\mathbb P^5$ whose strict transform by $\beta$ is $\alpha^*B$. \end{lemma}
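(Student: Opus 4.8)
The plan is to reduce the whole statement to a single incidence computation on the Pl\"ucker quadric, to read off the components of $\beta^{-1}(G)$ from it, and to get uniqueness from the fact that a quadric through $C$ is determined by its trace on the secant lines $\ell_d$. The first and main step is to prove that for $d = x + y \in \Theta = C^{[2]}$ the chord $\ell_d$ lies on $G$ if and only if $d \in B$. By the previous lemma the bicanonical point of $x$ is the Pl\"ucker point $[R_x]$ of the line $R_x = \mathbb P E_x^* \subset \mathbb P^3$, and the isomorphism $\det \colon \wedge^2 H^0(E) \to H^0(\omega_C^{\otimes 2})$ identifies the bicanonical $\mathbb P^5$ with the Pl\"ucker space $\mathbb P(\wedge^2 H^0(E)^*)$. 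Since $[R_x]$ and $[R_y]$ already lie on $G$, the chord $\ell_d = \overline{[R_x][R_y]}$ is contained in $G$ exactly when the bilinear form polarising $G$ annihilates the pair $([R_x],[R_y])$, i.e. by the Pl\"ucker relation when the lines $R_x, R_y \subset \mathbb P^3$ meet. The lines meet iff $E_x^* \cap E_y^* \neq 0$ inside $H^0(E)^*$, equivalently iff the evaluation $H^0(E) \to E_x \oplus E_y = E_d$ fails to be an isomorphism, that is iff $h^0(E(-d)) \geq 1$; by the description of $B_E$ as the degeneracy scheme of $ev^{[2]}$ this is precisely $d \in B$.

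With this in hand I would decompose the total transform. Because $C \subset G$ and every chord $\ell_d$ meets $C$ in its two endpoints $[R_x],[R_y]$, for $d \notin B$ one has $\ell_d \cap G = \{[R_x],[R_y]\}$, whereas for $d \in B$ the whole fibre $\ell_d$ lies on $G$ by the first step. Hence $\beta^{-1}(G) = S_0 \cup \alpha^*B$, where $S_0 := \beta^{-1}(C)$ is the bisection of $\alpha$ that $\beta$ contracts onto the curve $C \subset \operatorname{Sec}(C)$; thus $S_0$ is the $\beta$-exceptional divisor and occurs in $\beta^{-1}(Q)$ for every quadric $Q$ through $C$. Removing this exceptional part, the strict transform of $G$ is exactly the vertical divisor $\alpha^*B$, which proves existence.

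For uniqueness, note first that any quadric $Q$ whose strict transform equals the vertical divisor $\alpha^*B$ must contain $C$: otherwise $\ell_d \cap Q$ is not $\beta$-exceptional and would contribute a non-vertical component to the strict transform. Among quadrics through $C$ I would then use that $Q \mapsto B_Q := \{d \in \Theta \mid \ell_d \subset Q\}$ defines a linear map $r \colon H^0(\mathbb P^5, \mathcal I_C(2)) \to H^0(\Theta, \mathcal O_\Theta(2\Theta))$ between two $7$-dimensional spaces, and that $r$ is the natural isomorphism $\mathbb P^6 \to \mathbb P^6$ fixed in Section 3; since $\alpha^*$ is injective on divisors, $\alpha^* B_Q = \alpha^* B$ forces $B_Q = B$, and injectivity of $r$ then gives $Q = G$. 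I expect this last step to be the principal obstacle: the set-theoretic incidence is clean linear algebra on the Grassmannian, but promoting $B_Q$ to an honest member of $|\mathcal O_\Theta(2\Theta)|$ and proving that $r$ is a genuine linear isomorphism, rather than a mere bijection on the special locus, is exactly the computation of \cite{BV} 4.22 and carries the real content.
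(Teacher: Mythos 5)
Your argument is sound, but note that the paper itself offers no proof of this lemma: it simply cites \cite{BV}~4.22, so there is no internal argument to compare against. Your reconstruction is the right one and its core step is correct: since the polarization of the Pl\"ucker quadric vanishes on a pair of decomposable vectors exactly when the corresponding lines of $\mathbb P^3$ meet, and since $R_x\cap R_y\neq\emptyset$ iff $ev_d\colon H^0(E)\to E_d$ drops rank iff $h^0(E(-d))\geq 1$, you get $\ell_d\subset G\Leftrightarrow d\in B$, and the decomposition $\beta^*G=\beta^{-1}(C)+\alpha^*B$ follows (one should check the vertical part occurs with multiplicity one, e.g.\ by intersecting with the cone over $C$ from a general point $x\in C$: its degree $7$ hyperplane-section count gives $14=8+6$, matching $\deg\mathcal O_{x+C}(B)=6$; and the chord argument should be read off the diagonal $d=2x$, which is harmless for a divisorial identity). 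The only place where you lean on \cite{BV} is the injectivity of the linear map $r\colon H^0(\mathcal I_C(2))\to H^0(\mathcal O_\Theta(2\Theta))$, and this can in fact be closed elementarily, making your proof self-contained where the paper is not: if $r(Q)=0$ then every chord of $C$ lies on $Q$, hence every plane spanned by three general points of $C$ meets $Q$ in a conic containing three pairwise distinct lines and so lies entirely in $Q$; the union of such trisecant planes is all of $\mathbb P^5$ for a nondegenerate curve, forcing $Q=0$. With that observation your uniqueness step needs only injectivity of $r$ (not that it is an isomorphism onto $H^0(\mathcal O_\Theta(2\Theta))$), and the full identification of the two $\mathbb P^6$'s can be left, as the paper does, to \cite{BV}.
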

More geometrically the strict transform by $\beta$ of the linear system of quadrics $\vert \mathcal I_{C \vert \mathbb P^5}(2) \vert$ coincides with the
pull back of $\vert \mathcal O_{\Theta}(2\Theta) \vert$ by $\alpha$. Then, after the natural identification of these two linear systems, the next theorem follows.
\begin{thm} The map $\gamma: \mathcal B \to \mathbb P^6$ factors through the embedding
of $\mathcal B$ in $\vert 2\Theta \vert$ via the theta map and the linear projection $\rho: \vert 2\Theta \vert \to \mathbb P^6$.\end{thm}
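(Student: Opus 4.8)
The plan is to produce an explicit isomorphism between the two copies of $\mathbb P^6$ that serve as the targets of $\gamma$ and of $\rho$, and then to verify by a diagram chase that $\gamma$ coincides with $\rho$ composed with the theta map once these targets are identified. The isomorphism is the one hinted at in the remark preceding the statement, and its source is the correspondence $I$ with its projections $\alpha : I \to \Theta$ and $\beta : I \to \mathbb P^5$. First I would analyse, for a quadric $Q \in \vert \mathcal I_{C \vert \mathbb P^5}(2) \vert$, the pullback divisor $\beta^* Q$ on $I$. Since $Q \supset C$ and each secant line $\ell_d$ meets $C$ exactly along $\Supp(d)$ for general $d$, one has $d \subset \ell_d \cap Q$; as $\deg(\ell_d \cap Q) = 2$ whenever $\ell_d \not\subset Q$, the line $\ell_d$ either meets $Q$ precisely in $d$ or is entirely contained in $Q$.

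Consequently $\beta^* Q$ splits into a \emph{fixed} part $F = \{(d,z) : z \in \Supp(d)\}$, a bisection of $\alpha$ lying in $\beta^*Q$ for every member of the system because every such $Q$ contains $C$, together with a \emph{moving} part supported over the locus $B_Q := \{ d \in \Theta : \ell_d \subset Q \}$. Because $\alpha$ is the $\mathbb P^1$-bundle of secant lines, the moving part is exactly $\alpha^* B_Q$, which is the precise meaning of the remark that the strict transform by $\beta$ of $\vert \mathcal I_{C \vert \mathbb P^5}(2) \vert$ is $\alpha^*$ of $\vert \mathcal O_\Theta(2\Theta) \vert$. I would then check that $B_Q \in \vert \mathcal O_\Theta(2\Theta) \vert$; since $\alpha$ has connected fibres, $\alpha^*$ is injective on divisors, so $Q \mapsto B_Q$ is a well-defined morphism of linear systems, and a dimension count (both systems are $\mathbb P^6$) together with the surjectivity asserted in the remark upgrades it to an isomorphism $\iota : \vert \mathcal I_{C \vert \mathbb P^5}(2) \vert \xrightarrow{\ \sim\ } \vert \mathcal O_\Theta(2\Theta) \vert$.

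With $\iota$ available the conclusion is formal. By definition $\gamma([E]) = G$, and the lemma characterising $G$ through the correspondence $I$ states exactly that the strict transform of $G$ by $\beta$ is $\alpha^* B_E$, that is $\iota(G) = B_E$. On the other hand $B_E = \Theta_E \vert_\Theta$ is the degeneracy curve of $E$ by its very definition, while $\rho$ was introduced as the restriction map attached to the sequence $0 \to \mathcal O_{J^2}(\Theta) \to \mathcal O_{J^2}(2\Theta) \to \mathcal O_\Theta(2\Theta) \to 0$, so that $\rho(\Theta_E) = \Theta_E \vert_\Theta = B_E$. Combining, $\iota(\gamma([E])) = B_E = \rho(\Theta_E)$, where $\Theta_E$ is the image of $[E]$ under the theta map; hence, after the identification $\iota$, one gets $\gamma = \rho \circ (\text{theta map})$ as rational maps (valid where $[E] \neq [E_o]$, so that $B_E$ is a genuine curve), which is the assertion.

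I expect the real work to sit in pinning down $\iota$ rather than in this final chase. Two points need care. First, one must confirm that the residual moving part of $\beta^* Q$ is exactly $\alpha^* B_Q$, with no hidden multiplicity along $F$; this requires the scheme structure of $I$ over the diagonal of $C^{[2]}$, where $d$ degenerates to a length-two subscheme on a tangent line. Second, one must verify that the linear equivalence class of $B_Q$ is precisely $\mathcal O_\Theta(2\Theta)$ and not a twist, which is where the normalisation $\deg R = 8$ and the identification $\Theta \cong C^{[2]}$ enter the computation of $\beta^*\mathcal O_{\mathbb P^5}(2) - F$ on $I$. Once these are settled the identification used in the lemma on $G$ matches the restriction map $\rho$ on the nose, and the theorem follows.
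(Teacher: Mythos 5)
Your argument is correct and follows exactly the route the paper takes: the paper's entire proof consists of the lemma that $G$ is the unique quadric whose strict transform under $\beta$ is $\alpha^*B_E$, the remark that the strict transform of $\vert \mathcal I_{C \vert \mathbb P^5}(2)\vert$ is $\alpha^*\vert \mathcal O_\Theta(2\Theta)\vert$, and the sentence ``after the natural identification of these two linear systems, the next theorem follows,'' with the details delegated to \cite{BV}. Your proposal simply spells out that identification $\iota$ (the fixed-part/moving-part decomposition of $\beta^*Q$ and the verification that $B_Q \in \vert\mathcal O_\Theta(2\Theta)\vert$) and then performs the same formal chase, so it matches the paper's proof in substance.
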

See \cite{BV, NR}. It is now useful to summarize more informations from the literature on the embedding $\mathcal B \subset \vert 2\Theta \vert$ and the linear projection
$\gamma: \mathcal B \to \mathbb P^6$ of center $[E_o]$. Let us briefly recall that: \medskip \par
$\circ$ \it $\mathcal B$ is embedded as the unique quartic hypersurface, the Coble quartic of $C$, which is singular along the Kummer variety of the curve $C$ \rm
$$
K(C) := \lbrace [E] \in \mathcal B \ \vert \ [E] = [\omega_C(e) \oplus \omega_C(-e)], \ \mathcal O_C(e) \in \Pic^0(C) \rbrace.
$$
\par \par
$\circ$ \it $K(C)$ is $\Sing \mathcal B$, moreover it is the locus of strictly semistable points of $\mathcal B$. Each point of $K(C)$ has multiplicity two for $\mathcal B$.
\medskip \par
$\circ$ $[E] = [\omega_C(e) \oplus \omega_C(-e)]$ if and only if $E$ fits in an exact sequence \rm
$$
0 \to \omega_C(e) \to E \to \omega_C(-e) \to 0.
$$
\par
In order to describe $\gamma$ notice that a stable point $[E]$ satisfies $h^1(E) = 0$ and hence $h^0(E) = 4$. Indeed we have $H^1(E) = \Hom(E, \omega_C)$
by Serre duality, therefore $h^1(E) \geq 1$ implies that $[E]$ is strictly semistable. Notice that a point  $[E] \in K(C)$ satisfies $h^0(E) \geq 5$ iff $[E]Ê= [E_o]$, with $E_o = \omega_C \oplus \omega_C$. As
is well known the divisor $\Theta_{E_o}$ is just $2\Theta$. In particular the map $\gamma: \mathcal B \to \mathbb P^6$ of the Coble quartic is the linear projection from its double point $[E_o]$ and a rational double covering. Let  
$$
\iota: \mathcal B \to \mathcal B
$$
be its associated birational involution, since now we assume $[E] \neq [E_o]$.  Let $\ell$ be the line joining $[E_o]$ to $[E]$, then exactly one
  of the following cases is possible: 
\begin{enumerate} 
\item $\ell \subset \mathcal B$,
\item $\ell \cdot \mathcal B = 3[E_o] + [E]$ and $\iota([E]) = [E_o]$,
\item $\ell \cdot \mathcal B = 2[E_o]Ê+ [E]Ê+ [\iota(E)]$ and $\iota([E]) \neq [E_o]$.
\end{enumerate}
Now assume that $E$ is globally generated, then the usual sequence 
$$
0 \to \overline E^* \to H^0(E) \otimes \mathcal O_C \stackrel{ev} \to E \to 0
$$
is exact and either $\overline E$ is stable or $[\overline E]Ê= [E_o]$. Hence $\iota$ is regular at $[E]$ and $[\overline E]Ê= \iota([E])$. Let $\mathbb P$ and $\overline {\mathbb P}$   be
the projectivizations of $E$ and $\overline E$, then we have their tautological models $R \subset \mathbb P^3$ and $\overline R \subset \mathbb P^{3*}$. It is easy to see that, since the determinant
of $E$ is bicanonical, these are embedded as dual surfaces:
 $$
\overline R = \bigcup_{x \in C} \overline R_x,
$$
where $\overline R_x \subset \mathbb P^{3*}$ denotes the pencil of planes through the line $R_x$ of $R$.  
Let $G = \gamma([E])$, since $[E]Ê\neq [E_0]$ we have $h^0(E) = 4$ and $G$ can be constructed from $H^0(E)$ as in \cite{BV} section 2.

We recall this construction for a general [E]: for such an $[E]$ the determinant map $\wedge^2 H^0(E) \to H^0(\omega_C^{\otimes 2})$ is an isomorphism. Consider the
natural pairing  $$ \wedge^2 H^0(E) \times \wedge^2 H^0(E) \to \wedge^4 H^0(E) $$ and the quadratic form $q_E$ induced by it on the space $\mathbb P^5 = \mathbb P H^0(\omega_C^{\otimes 2})^*$.
By definition $G$ is the zero locus of $q_E$.  Notice also that $G$ is the Pl\"ucker embedding of the Grassmannian of lines of $\mathbb PH^0(E)^*$. Moreover $C$ is embedded in $G$ so that  $E^*$ is the restriction to $C$ of the universal bundle.  \par Let $\sigma_{2,0}$ be the family of planes of $G$ which are dual of the planes in $\mathbb P^3$, then each $P \in \sigma_{2,0}$ corresponds to 
some point $[s] \in \mathbb P^3$ so that $P \cdot C$ is the scheme of zeroes of $s \in H^0(E)$. In the same way let $\sigma_{1,1}$ be the family of planes of $G$ which are parametrizing the lines through a point of $\mathbb P^3$, then each $\overline P \in \sigma_{1,1}$ corresponds to a point $[\overline s] \in \mathbb PH^0(\overline E)$ and $\overline P \cdot C$ is the scheme of zeroes of $\overline s \in H^0(\overline E)$. \par
Let us also recall that $C \subseteq V \cap G$, where $V$ is the unique Veronese surface containing $C$.  The next theorem characterizes the locus of points $[E]$ such that $E$ is globally generated
and $\iota([E]) = [E_o]$.
\begin{thm} Let $E$ be stable and globally generated, then $V$ is contained in $G$ if and only if $\iota([E]) = [E_o]$. \end{thm}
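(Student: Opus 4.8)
\emph{Strategy.} The plan is to pull back both sides of the equivalence to divisorial conditions on $[E]$ in the target $\mathbb P^6$ of $\gamma$, and to show they coincide once one knows the local structure of the Coble quartic at the distinguished semistable point $[E_o]$.

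\emph{Reducing $V\subset G$ to a hyperplane.} First I would note that $V\subset G$ is a single linear condition on $G$. Indeed $G|_V$ is a quartic divisor on $V\cong\mathbb P^2$ that already contains the irreducible plane quartic $C$; hence either $G|_V=C$, in which case $V\cap G=C$, or $G|_V$ vanishes identically and $V\subset G$. Therefore the quadrics through $C$ that contain $V$ form a hyperplane
$$
H:=|\mathcal I_{V|\mathbb P^5}(2)|\subset \mathbb P^6=|\mathcal I_{C|\mathbb P^5}(2)|,
$$
and $V\subset G\iff\gamma([E])\in H$. Using the factorization of $\gamma$ through the theta map $\theta_{\mathcal B}\colon\mathcal B\hookrightarrow\mathbb P^7=|2\Theta|$ and the projection $\rho$ from $[E_o]$ established above, the preimage $\widetilde H:=\overline{\rho^{-1}(H)}$ is a hyperplane of $\mathbb P^7$ through $[E_o]$, so that $V\subset G\iff\theta_{\mathcal B}([E])\in\widetilde H$, i.e. $[E]\in\mathcal B\cap\widetilde H$.

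\emph{Reducing $\iota([E])=[E_o]$ to a tangency.} Since $E$ is stable and globally generated, $\iota$ is regular at $[E]$ with $\iota([E])=[\overline E]$, and the trichotomy $(1)$--$(3)$ for the line $\ell=\langle[E_o],\theta_{\mathcal B}([E])\rangle$ shows that $\iota([E])=[E_o]$ holds exactly in case $(2)$, namely $\ell\cdot\mathcal B=3[E_o]+[E]$; equivalently $\ell$ is tangent to $\mathcal B$ at the double point $[E_o]$, i.e. $\theta_{\mathcal B}([E])$ lies in the tangent cone $\operatorname{TC}_{[E_o]}\mathcal B$.

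\emph{The crux.} The heart of the matter is the identification
$$
\operatorname{TC}_{[E_o]}\mathcal B=2\widetilde H,
$$
that is, the tangent cone of the Coble quartic at $[E_o]=[\omega_C\oplus\omega_C]$ is the double hyperplane supported on $\widetilde H$. Granting this, for $[E]\in\mathcal B\cap\widetilde H$ the line $\ell$ lies in $\widetilde H$, hence in the tangent cone, so it is tangent to $\mathcal B$ at $[E_o]$ and $\iota([E])=[E_o]$; conversely $\iota([E])=[E_o]$ forces $\theta_{\mathcal B}([E])\in\operatorname{TC}_{[E_o]}\mathcal B=\widetilde H$. Combined with the first step this gives $V\subset G\iff\iota([E])=[E_o]$. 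To prove the displayed identity I would write the local equation of $\mathcal B$ at $[E_o]=[2\Theta]$ in second–order theta coordinates and check, via the Riemann relations, that its leading quadratic term is a perfect square $L^2$; the linear form $L$ then cuts a hyperplane which, under the correspondence $I$ identifying $|\mathcal O_\Theta(2\Theta)|$ with $|\mathcal I_C(2)|$, is exactly $H$. This degeneration of the quadratic term to rank one — reflecting the jump $h^0(E_o)=6>4$ at the distinguished point — and the matching of its support with the Veronese condition is the main obstacle; alternatively it can be extracted from the description of the singularity of the Coble quartic in \cite{NR,BV}.

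\emph{Geometric confirmation.} Finally I would corroborate the statement through the scroll $R$. A Veronese surface inside the smooth quadric $G=\Gr(2,4)$ is the congruence of chords of a twisted cubic $\Gamma\subset\mathbb P^3$; when $V\subset G$ the curve $C$ lies on this chord–Veronese, so every ruling $R_x$ is a bisecant of $\Gamma$ and $\Gamma$ is a multiple curve of $R$. This is precisely the configuration of the $m=4$ stratum, where $\overline E$ is strictly semistable and $\Sing R$ is a rational normal cubic, i.e. $\iota([E])=[E_o]$; conversely strict semistability of $\overline E$ produces such a $\Gamma$ and places $C$ on its chord–Veronese. This provides an independent check of the equivalence and, through $R$, an alternative route to the implication $\iota([E])=[E_o]\Rightarrow V\subset G$.
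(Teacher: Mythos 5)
Your reduction of both sides of the equivalence to conditions in $\vert 2\Theta\vert$ is a genuinely different route from the paper's, and its logical skeleton is sound: the observation that $V\subset G$ is the single linear condition $\gamma([E])\in H:=\vert\mathcal I_{V\vert\mathbb P^5}(2)\vert$ is correct (a quadric restricts to $V\cong\mathbb P^2$ as a plane quartic containing the quartic $C$, so it either cuts exactly $C$ or contains $V$), and the translation of $\iota([E])=[E_o]$ into the tangency condition $\ell\cdot\mathcal B=3[E_o]+[E]$ is exactly the trichotomy set up in Section 3. The problem is that you have concentrated the entire content of the theorem into the identity $\operatorname{TC}_{[E_o]}\mathcal B=2\widetilde H$, and then not proved it. Two separate things are being asserted there: first, that the rank of the quadric tangent cone of the Coble quartic at the distinguished Kummer point $[E_o]=[\omega_C\oplus\omega_C]$ drops to one; second, that the support of that double hyperplane is precisely $\rho^{-1}(\vert\mathcal I_{V\vert\mathbb P^5}(2)\vert)$. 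You flag this yourself as ``the main obstacle'' and propose either a theta-coordinate computation or an appeal to \cite{NR,BV}, but neither is carried out, and the matching of the support with the Veronese hyperplane is not a statement one can simply quote. As it stands this is a genuine gap, not a routine verification. The closing ``geometric confirmation'' does not repair it: it invokes the description of the $m=4$ stratum ($\Sing R$ a twisted cubic, every ruling bisecant to it), but in the paper that description is \emph{derived from} the present theorem, so using it here is circular.

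For comparison, the paper's own proof is short, direct, and avoids the Coble quartic entirely. If $V\subset G$, then $G\cap\operatorname{Sec}V$ is a union of planes $\Pi$ spanned by conics of $V$; each such $\Pi$ cuts a canonical divisor $Z$ on $C$, and since $E$ is stable, $Z$ must be the zero scheme of a section of $\overline E$, forcing $\omega_C\hookrightarrow\overline E$ with quotient $\omega_C$, i.e. $[\overline E]=[E_o]$. Conversely, if $[\overline E]=[E_o]$ there is a section of $\overline E$ vanishing on a canonical divisor $Z$, hence a plane $\Pi\subset G$ of the ruling $\sigma_{1,1}$ with $\Pi\cdot C=Z$; the conic $B$ of $V$ through $Z$ lies in $\Pi\subset G$, so $G\cap V\supseteq C\cup B$ has degree $10>8$, forcing $V\subset G$. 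If you want to salvage your approach, the cleanest fix is to run the paper's argument once to identify the locus $\{\iota([E])=[E_o]\}$ with $\mathcal B\cap\widetilde H$, from which your tangent-cone identity then \emph{follows} by the degree count you implicitly rely on ($\mathcal B\cap\operatorname{TC}_{[E_o]}\mathcal B$ has degree $8$, twice that of $\mathcal B\cap\widetilde H$) --- but that inverts the logical order and makes the tangent-cone statement a corollary rather than a tool.
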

 \begin{proof} We have $\overline E]Ê= \iota([E])$. Let $Sec \ V$ be the cubic hypersurface equal to the 
 
 union of the planes $\Pi$ spanned by the conics of $V$. Assume $V \subset G$, it is easily
 seen that $T = G \cap Sec \ V$ is union of planes $\Pi$. Now $Z := \Pi \cdot C$ is a canonical divisor. Since $\deg Z = 4$ and $E$ is stable it follows that $Z$ is 
 the scheme of zeroes of some $\overline s \in H^0(\overline E)$. Hence $\overline E$ is strictly semistable and it follows $[\overline E]Ê= [E_o]$. Conversely let
 $[\overline E] = [E_o]$, then there exists a plane $\Pi$ as above in $G$ so that $Z = \Pi \cdot C$ is a canonical divisor. But then we have $Z \subset B \subset \Pi \subset G$, where
 $B$ is a conic of $V$. Then $G$ contains $C \cup B$ and hence contains $V$ by counting the degrees.  \end{proof}

\begin{thm} Let $E$ be stable and globally generated then $R$ is an octic scroll. \end{thm}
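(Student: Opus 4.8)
The plan is to exhibit $R$ as the image of the ruled surface $\mathbb P=\mathbb P(E)$ under the tautological morphism $f$, to read its degree off from the self-intersection of the tautological class, and to reduce the whole statement to the birationality of $f$ onto $R$.

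First I would record the formal points. Since $E$ is stable, Serre duality gives $h^1(E)=\dim\Hom(E,\omega_C)=0$ (a nonzero homomorphism $E\to\omega_C$ would destabilize $E$), so $h^0(E)=4$ by Riemann--Roch and $f\colon\mathbb P\to\mathbb P^3=\mathbb P H^0(E)^*$ is the morphism attached to the complete system $\vert\mathcal O_{\mathbb P}(1)\vert$; in particular $R$ is non-degenerate. Global generation makes every evaluation $ev_x$ surjective, hence $E^*_x\hookrightarrow H^0(E)^*$, and $f$ sends each fibre isomorphically onto the line $R_x=\mathbb P E^*_x$. Thus $R=\bigcup_x R_x$ is swept by a one-dimensional family of lines; since the map $x\mapsto[R_x]$ into the Grassmannian is given by $\omega_C^{\otimes2}$ and hence non-constant, the lines $R_x$ genuinely move and $R$ is a surface, i.e. a scroll. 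Because $c_2(E)=0$ on a curve, the Grothendieck relation $\mathcal O_{\mathbb P}(1)^2=\pi^*c_1(E)\cdot\mathcal O_{\mathbb P}(1)$ yields $\mathcal O_{\mathbb P}(1)^2=\deg\det E=\deg\omega_C^{\otimes2}=8$, and the projection formula gives $\deg f\cdot\deg R=8$. Everything therefore comes down to showing $\deg f=1$.

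The hard part is this birationality, equivalently that a general point of $R$ lies on a unique ruling $R_x$, $x\in C$, and I would argue it in two steps, both resting on stability. First, $R$ is not a cone: a common point $v=[\lambda]$ of all the $R_x$ would force $\lambda\in E^*_x$ for every $x$, i.e. the constant section $\lambda$ of $H^0(E)^*\otimes\mathcal O_C$ to lie in the subbundle $E^*$, producing a nonzero section of $E^*$; but $E$ stable of positive degree has $h^0(E^*)=0$. As $R$ is also non-degenerate, the family $\{R_x\}$ is neither concurrent nor coplanar, so a general pair of rulings is skew, only finitely many rulings meet a fixed general $R_{x_0}$, and a general point of $R_{x_0}$ lies on no other ruling. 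Second, the parametrization $x\mapsto[R_x]$ is generically injective: $R_x=R_{x'}$ forces both generators of the pencil $H^0(E(-x))$ to vanish at $x'$, and since a section $s\in H^0(E)$ vanishes on an effective divisor $Z(s)$ with $\mathcal O_C(Z(s))\hookrightarrow E$ a sub-line-bundle, stability gives $\deg Z(s)\le 3$; hence each such pencil has only finitely many base points, and a base point persisting for all $x$ would organize into a destabilizing sub-line-bundle of $E$, which is excluded. Combining the two steps, $f$ is generically one-to-one, so $\deg f=1$, $\deg R=8$, and $R$ is the desired octic scroll. The genuine obstacle is exactly this last point: the self-intersection computation is automatic, and the whole weight of the theorem lies in excluding that the scroll is swept more than once — precisely where stability, through $h^0(E^*)=0$ and the bound $\deg Z(s)<4$ on destabilizing sub-line-bundles, is indispensable.
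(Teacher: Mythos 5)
Your reduction of the theorem to $\deg f=1$ is sound and cleanly organized: the Grothendieck relation does give $\deg f\cdot\deg R=8$, stability does give $h^0(E^*)=0$ so that $R$ is not a cone, and since the family of rulings is irreducible, non-concurrent and non-coplanar, a general point of $R$ does lie on a unique \emph{line} of the family. The gap is in your second step, the generic injectivity of $x\mapsto[R_x]$. What your argument actually proves is only that this map has finite fibres: stability bounds $\deg Z(s)\le 3$ for $s\in H^0(E)\setminus\{0\}$, so each pencil $H^0(E(-x))$ has finite base locus and only finitely many $x'$ satisfy $R_{x'}=R_x$. The further assertion that ``a base point persisting for all $x$ would organize into a destabilizing sub-line-bundle of $E$'' is not substantiated, and the configuration to be excluded is not a fixed persistent base point but a \emph{moving} one, $x'=x'(x)$; no sub-line-bundle of $E$ arises directly from the base loci of the individual pencils. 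If $x\mapsto[R_x]$ has degree $m\ge 2$ onto its image, what happens instead is that the subbundle $E^*\subset H^0(E)^*\otimes\mathcal O_C$ is pulled back through a degree-$m$ map $\psi:C\to A$, so $E=\psi^*F$ for a rank-two bundle $F$ on $A$ of degree $8/m\le 4$ with $h^0(F)\ge 4$; Riemann--Hurwitz forces $g(A)\le 2$, and in each resulting case ($F$ of even degree on a base of genus $\le 1$, or $g(A)=2$ where $h^0(F)\ge 4$ forces $h^1(F)\ge 2$ and hence a nonzero map $F\to\omega_A$) the bundle $F$ acquires a sub-line-bundle of degree $\ge\mu(F)$, whose pullback destabilizes $E$. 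So the conclusion you want is true, but the destabilizing subbundle comes from this descent, not from the pointwise bound $\deg Z(s)\le 3$.

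This missing descent is exactly the content of the paper's own proof, which runs in the opposite direction: if the tautological map had degree $\ge 2$, then $R$ would be a quadric or a quartic scroll, its minimal desingularization would be $\mathbb P(F)\to A$ with $A$ of genus $0$ or $1$, and $E=\pi^*F$ for the induced cover $\pi:C\to A$; since such an $F$ is never stable, neither is $E$. Your route is a legitimate variant and arguably isolates the two failure modes (several rulings through a general point, versus a multiply-covered family of rulings) more explicitly than the paper does, but as written the second and harder of the two is asserted rather than proved.
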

\begin{proof} Since $E$ is globally generated, the tautological map $\tau: \mathbb P \to R$ is a morphism. Assume $\deg R \geq 2$, then either $R$ is a quartic scroll or a quadric and its minimal desingularization is the projectivization of a vector bundle $F \to A$, where $A$ is a smooth integral curve of genus $1$ or $0$.

Notice that $E = \pi^*F$, where $\pi: C \to A$ is a $2:1$ cover if $\deg R = 4$ or a $4:1$ cover if $\deg R = 2$.
We claim that $F$ is not stable. But then $E$ is not stable and this contradiction implies the statement. The claim is clear if $R$ is a quadric. If $R$ is a quartic then $R$ has a singular curve and hence
$F$ admits linear subbundles of degree two. Then $F$ is not stable.  \end{proof}
We assume from now the property that $E$ is stable and globally generated, which is the case to be considered.
  \section{Theta divisor of $E$ and singularities of $R$}

Now we want to study the degeneracy curve $B = \Theta \cdot \Theta_E$ and to see how $B$ is related to the singular locus of the scroll $R$ defined by $E$. Notice that we have
$$
\mathcal O_{\Theta}(B) \cong \mathcal O_{\Theta}(\sum (x_i+C)- \delta) \in \Pic \Theta
$$
cfr. \cite{BV} 4.7, where $\sum x_i$ is a bicanonical divisor and our notation is fixed as follows.  We set $(x + C) := \lbrace d \in \Theta \ \vert \ x \in \Supp d \rbrace$. Moreover $2\delta$ is the class of the 
diagonal $\Delta := \lbrace 2x \in \Theta, \ x \in C \rbrace$ and $\delta$ defines a natural $2:1$ cover $C \times C \to \Theta$.  \par
 \begin{lemma} $B$ is a smooth, integral curve of genus 19 intersecting transversally a general curve $(x + C)$ at six points. \end{lemma}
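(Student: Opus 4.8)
The plan is to reduce everything to intersection theory on the smooth surface $\Theta\cong C^{[2]}$ (smooth because $C$ is non‑hyperelliptic, so $h^{0}(L)=1$ for every $L\in\Theta$), and to treat the numerical invariants first and smoothness/irreducibility afterwards. First I would fix the standard classes on $C^{[2]}$: the fibre class $x=[(p+C)]$ and the class $\delta$ with $2\delta=[\Delta]$. Pulling back along the double cover $\pi\colon C\times C\to C^{[2]}$, where $\pi^{*}x=f_{1}+f_{2}$ (the two rulings) and $\pi^{*}\delta=\Delta_{C\times C}$, and using $\pi^{*}D\cdot\pi^{*}E=2(D\cdot E)$, one gets $x^{2}=1$, $x\cdot\delta=1$ and $\delta^{2}=1-g=-2$. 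The ramification formula then gives $K_{\Theta}=(2g-2)x-\delta=4x-\delta$; equivalently, since $J^{2}$ is abelian and $\Theta$ a theta divisor, adjunction yields $K_{\Theta}=\mathcal O_{\Theta}(\Theta)$, consistent with $K_{\Theta}^{2}=\Theta^{3}=3!=6$.

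Next I would pin down the class of $B$. Since $\Theta_{E}\in|2\Theta|$ and $B=\Theta_{E}\cdot\Theta$, restriction gives $[B]=2\,\mathcal O_{\Theta}(\Theta)=2K_{\Theta}=8x-2\delta$, which is the class recorded in \cite{BV} 4.7. The two numerical claims are then immediate: the intersection with a fibre is $B\cdot x=2K_{\Theta}\cdot x=2(4x-\delta)\cdot x=2(4-1)=6$, so $B$ meets a general $(x+C)$ in six points; and by adjunction $2p_{a}(B)-2=B\cdot(B+K_{\Theta})=2K_{\Theta}\cdot 3K_{\Theta}=6K_{\Theta}^{2}=36$, so $p_{a}(B)=19$.

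The main work, and the hard part, is to show that $B$ is smooth and integral, so that $19$ is the true genus and the six points are distinct. For integrality I would note that $2K_{\Theta}=\mathcal O_{\Theta}(2\Theta)|_{\Theta}$ is ample (restriction of the principal polarization), so by Serre duality and Kodaira vanishing $H^{1}(\Theta,\mathcal O_{\Theta}(-B))\cong H^{1}(\Theta,K_{\Theta}+B)^{\vee}=0$; the ideal sequence of $B$ then forces $h^{0}(\mathcal O_{B})=h^{0}(\mathcal O_{\Theta})=1$, i.e. $B$ is connected, and a connected smooth curve is integral. For smoothness I would view $B$ either as the degeneracy divisor of the square bundle map $ev^{[2]}\colon H^{0}(E)\otimes\mathcal O_{\Theta}\to E[2]$ of two rank‑four bundles, or as the intersection $\Theta_{E}\cap\Theta$ inside $J^{2}$, and prove this intersection is transverse. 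The cleanest route is to compare Gauss maps: at a point $d=p+q$ the Gauss map of $\Theta$ records the secant line $\overline{pq}$ in the canonical $\mathbb P^{2}$, while that of $\Theta_{E}$ is governed by $E$, and one uses the stability of $E$ to exclude both the coincidence of tangent planes $T_{d}\Theta=T_{d}\Theta_{E}$ and the singular points of $\Theta_{E}$ along $B$. I expect this transversality to be the crux. An alternative would be to transport smoothness through the identification of \cite{BV} (that $\alpha^{*}B$ is the strict transform of the smooth quadric $G$ under $\beta$), or to establish smoothness first for a general stable $[E]$ by Bertini on the base‑point‑free system $|2\Theta|$ and then extend by semicontinuity.

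Finally, the transversal intersection with a general fibre is a consequence of smoothness: the curves $(x+C)$, $x\in C$, form a covering family on $\Theta$, so by generic transversality the general member meets the smooth curve $B$ in $B\cdot x=6$ distinct reduced points, as claimed.
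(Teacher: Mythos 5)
Your numerical work is correct and is actually more complete than what the paper writes down: on the smooth surface $\Theta\cong C^{[2]}$ the class of $B$ is $c_1(E[2])=8x-2\delta$, and with $K_\Theta=\mathcal O_\Theta(\Theta)=4x-\delta$ adjunction gives $p_a(B)=19$ and $B\cdot(x+C)=6$. (Your class is the right one: the formula $\sum(x_i+C)-\delta$ displayed in the paper just before the lemma would give $B\cdot(x+C)=7$, so as written it is off by a factor of $2$ on the diagonal term; your computation is the internally consistent one.) The paper simply quotes the class from \cite{BV} and asserts the genus and the intersection number, so this part of your proposal fills in details the paper omits.

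The divergence, and the one real defect, is in the smoothness argument, which you rightly identify as the only substantive point but do not complete. The Gauss-map comparison of $T_d\Theta$ and $T_d\Theta_E$ that you present as ``the crux'' is left as a programme, and it is never needed. The paper's entire proof is the observation that the restriction $\rho:\vert 2\Theta\vert\to\vert\mathcal O_\Theta(2\Theta)\vert$ is the (surjective) linear projection from the point $[E_o]\in\mathcal B$, so a general $[E]$ yields a \emph{general} member $B$ of the ample, base-point-free system $\vert\mathcal O_\Theta(2\Theta)\vert$; Bertini then gives smoothness and integrality in one stroke. This is exactly the ``alternative'' you mention in a single clause at the end of your third paragraph -- had you led with it, the proof would be three lines, and your Kodaira-vanishing connectedness argument (correct, but redundant) and the semicontinuity step (neither needed nor available: the statement only holds, and is only claimed, for general $[E]$) would disappear. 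For the transversality with a general $(x+C)$, note that ``generic transversality of a covering family'' is not a theorem one can invoke without a group action; either justify it by generic smoothness of the finite map $\widetilde B\to C$, or argue as the paper does, that $\vert B\vert$ restricts to a base-point-free system on $x+C$, so a general $B$ cuts a reduced degree-$6$ divisor there.
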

\begin{proof} Since the restriction $\rho: \vert 2\Theta \vert \to \vert \mathcal O_{\Theta}(2\Theta) \vert$ is surjective,  a general $[E] \in \mathcal B$ defines a general curve
$B = \Theta \cdot \Theta_E \in \vert \mathcal O_{\Theta}(2\Theta) \vert$. Since this is ample and base point free, $B$ is smooth and integral. 
Finally notice that $\vert \mathcal O_{J^2}(2\Theta) \vert$ is base point free and that its restriction to $C^{[2]}$ is $\vert B \vert$. Hence the restriction to $x + C$ of $\vert B \vert$ is also base point free,
which implies that a general $B$ is transversal to $x + C$.   \end{proof}
We denote by $\mathbb P_d$ the fibre of $\mathbb P \to C$ at $d \in \Theta$, then we have
$$
\mathbb P_d \subset \mathbb P E^*_d.
$$
If $d = x+y$ and $x \neq y$ then $\mathbb P_d$ is the union of the skew lines $\mathbb P_x, \mathbb P_y$ in the 3-dimensional space $\mathbb PE^*_d$. In what follows we
assume that $B$ is smooth, then  $h^0(E(-d)) = 1$ for any $d \in B$. Note that the evaluation $e_d: H^0(E) \to E_d$ defines a linear map of $3$-dimensional spaces
$ p_d: \mathbb PE_d^* \to \mathbb P^3$. Let $d \in B$, since $h^0(E(-d)) = 1$ it follows that the tautological map
$ \tau \vert \mathbb P_d: \mathbb P_d \to \mathbb P^3 $
fails to be an embedding exactly along a $0$-dimensional subscheme $$ s_d \subset \mathbb P_d $$ which is contracted by $\tau$ to a singular point of $R$. This defines a morphism
$$ \sigma: B \to \Sing R $$ sending $d \in B$ to the point $p_d(s_d)$. If $d = x + y$ with $x \neq y$ then $\sigma(d) = R_x \cap R_y$.  \par A more delicate question concerns the stratification of the family of 
scrolls $R$ according the properties of $\sigma$ and, in particular, according to its possible degrees. In \cite{F} Fano considers this problem and its special cases. Let us discuss something useful about.
Let $n: \mathbb P \to R$ be the normalization and $p: \mathbb P \to C$ the projection map. Assume $o \in \Sing R$ and consider the divisor $p_*n^*o = x_1 + \dots + x_k$. As usual let $[\overline E]$
$=$ $\iota([E]$, according to our convention we have 
$$
P_o \cdot C = x_1 + \dots + x_k,
$$
where $P_o$ is the plane in $G$ parametrizing all the lines through the point $o$.   
\begin{lemma} For every divisor $d \subset x_1 + \dots + x_k$ one has $h^0(F(-d)) \geq 1$. \end{lemma}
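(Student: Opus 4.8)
The plan is to reduce the claim to the dictionary, recalled above, between the planes of $G$ of type $\sigma_{1,1}$ and the global sections of $\overline E$ (which is the bundle $F$ relevant here, since $P_o \in \sigma_{1,1}$). First I would note that $P_o$, the plane of $G$ parametrizing the lines of $\mathbb P^3$ through the point $o$, lies by definition in the family $\sigma_{1,1}$. By the construction recalled before the lemma, such a plane corresponds to a point $[\overline s] \in \mathbb P H^0(\overline E)$, and $P_o \cdot C$ is exactly the zero scheme $Z(\overline s)$ of $\overline s$. Combining this with the identification $P_o \cdot C = x_1 + \dots + x_k$ fixed by our convention, I conclude that the section $\overline s$ vanishes along the whole effective divisor $D := x_1 + \dots + x_k$.

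It then remains to convert the vanishing of $\overline s$ into the stated cohomological inequality. Since $\overline s$ vanishes along the codimension one locus $D$, it factors through the inclusion $\overline E(-D) \hookrightarrow \overline E$; equivalently, dividing $\overline s$ by the tautological section of $\mathcal O_C(D)$ produces a nonzero element of $H^0(\overline E(-D))$. For an arbitrary subdivisor $d \subset D$ the inclusion $\mathcal O_C(-D) \hookrightarrow \mathcal O_C(-d)$ induces $\overline E(-D) \hookrightarrow \overline E(-d)$, and the image of the section just built is a nonzero element of $H^0(\overline E(-d)) = H^0(F(-d))$. This yields $h^0(F(-d)) \geq 1$, as desired.

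The argument is essentially formal once the correspondence between $\sigma_{1,1}$-planes and sections of $\overline E$ is granted. The one point deserving care — and the only place an obstacle could hide — is the scheme-theoretic matching $p_* n^* o = P_o \cdot C = Z(\overline s)$: one must know that $\overline s$ vanishes on $D$ with the correct multiplicities, so that division by the local equations of $d$ yields a regular, not merely rational, section of $\overline E(-d)$. Granting this, which is precisely the content of the conventions set up just before the statement, the inequality follows at once.
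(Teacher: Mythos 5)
Your argument is correct and is precisely the one the paper intends: the paper's ``proof'' consists of the single remark that the statement follows immediately from the preceding observations, namely the dictionary identifying the plane $P_o \in \sigma_{1,1}$ with a section $\overline s \in H^0(\overline E)$ whose zero scheme is $P_o \cdot C = x_1 + \dots + x_k$, which is exactly what you spell out. Your elaboration (dividing $\overline s$ by the equation of $D$ and then mapping into $H^0(\overline E(-d))$ for each subdivisor $d$) fills in the routine details the paper leaves implicit, so there is nothing further to add.
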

The proof follows immediately from the above remarks. Assume now that $\sigma$ is not birational, then we have $k \geq 3$ distinct lines of the scroll $R$, say $$R_{x_1} \dots R_{x_k},$$  passing through a general point $o \in \Sing R$. Let $P_o \subset G$ be the plane parametrizing the lines through $o$, then $x_1 \dots x_k$ are in $P_o$. Since $\mathcal I_{C \vert \mathbb P^5}$ is generated by quadrics no three of these points are collinear.  Since $E$ is stable and globally generated, we know that either $\overline E$ is stable or $[\overline E]Ê= [E_o]$. Therefore we have $k \leq 4$ and the equality holds iff $[\overline E] = [E_0]$. The next statement is known more in general.  We introduce a proof for completeness and  to study  its exceptions.
\begin{thm} $\sigma: B \to \Sing R$ is birational onto its image for a general $[E]$  and $\Sing R$ is an integral curve of multiplicity two for $R$. \end{thm}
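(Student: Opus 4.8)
The plan is to reduce both assertions to the single numerical statement that through a general point $o$ of $\Sing R$ there pass exactly two lines of the scroll $R$. First I would note that, away from finitely many pinch points, $\Sing R$ coincides with the image $\sigma(B)$: every point of the double curve has the form $R_x\cap R_y=\sigma(x+y)$ with $x+y\in B$. Since $B$ is integral, $\sigma(B)$ is an integral curve, and hence so is $\Sing R$. If exactly two lines $R_x,R_y$ pass through a general $o$, then $R$ has two branches through $o$, so $\Sing R$ has multiplicity two for $R$; moreover any $x'+y'\in B$ with $\sigma(x'+y')=o$ satisfies $\{x',y'\}\subseteq\{x,y\}$, so $\sigma^{-1}(o)=\{x+y\}$ and $\sigma$ is birational onto its image. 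Thus everything comes down to showing that the number $k$ of lines of $R$ through a general point of $\Sing R$ equals $2$.

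By the discussion preceding the statement we already know $k\le 4$, with $k=4$ only when $[\overline E]=[E_o]$. Since $[E]$, and hence $[\overline E]=\iota([E])$, is general, $\overline E$ is stable and $k\le 3$. It remains to exclude $k=3$ for a general $[E]$. Here I would use the description of the planes $P_o\in\sigma_{1,1}$: the intersection $P_o\cdot C=x_1+\dots+x_k$ is the zero scheme of a section $\overline s\in H^0(\overline E)$, and a section vanishing on an effective divisor $Z$ of degree $d$ yields a sub-line-bundle $\mathcal O_C(Z)\hookrightarrow\overline E$ of degree $d$ (this is exactly the mechanism that forces $d\le 3$ from $\det\overline E=\omega_C^{\otimes 2}$, of slope $4$). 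Consequently, if $k=3$ held at the general point $o$ of the one-dimensional curve $\Sing R$, then letting $o$ vary would produce a one-parameter family of effective divisors $x_1+x_2+x_3$, and, since the Abel map $C^{[3]}\to\Pic^3 C$ is generically injective, a one-parameter family of degree-$3$ sub-line-bundles of the fixed bundle $\overline E$.

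The crux is then to contradict this by proving that a general $\overline E$ carries only finitely many sub-line-bundles of the maximal degree $3$. I would argue by a dimension count in the fixed-determinant moduli space $\mathcal B$, which has dimension $6$: a degree-$3$ sub-line-bundle is the datum of $L\in\Pic^3 C$ together with a class in $\mathbb{P}\,\Ext^1(\omega_C^{\otimes 2}\otimes L^{-1},L)$, and a Riemann--Roch computation gives $\dim\Ext^1=4$, so the incidence variety of pairs $(\overline E,L)$ has dimension $3+3=6$. Since the Segre (Nagata) invariant of a stable rank-two bundle of even degree on a genus-$3$ curve is at most $g=3$ and of the parity of the degree, it equals $2$ for the general bundle, so the maximal sub-line-bundle degree is exactly $3$ and the incidence variety dominates $\mathcal B$; its general fibre is therefore finite. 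This finiteness contradicts the one-parameter family of the previous step, forcing $k\le 2$, hence $k=2$, and the proof concludes. The main obstacle I anticipate is making the middle step precise — ensuring that a genuinely moving triple, rather than a constant line bundle, is produced — together with checking that the incidence variety really dominates $\mathcal B$ (equivalently, that fixing the determinant leaves the generic Segre invariant equal to $2$).
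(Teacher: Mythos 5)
Your reduction of both claims to the single statement that exactly two lines of $R$ pass through a general point of $\Sing R$ is the same reduction the paper makes, but your finiteness argument for excluding $k=3$ lives on the other side of the $2:1$ map $\gamma$. The paper works with the incidence variety $Q=\{(t,G)\ \vert\ P_t\subset G\}\subset U\times\mathbb P^6$, where $U\subset C^{[3]}$ and $P_t$ is the plane spanned by $t$ in the bicanonical $\mathbb P^5$: containing $P_t$ imposes three conditions on the $\mathbb P^6$ of quadrics through $C$, so $Q$ is a $\mathbb P^3$-bundle over the threefold $U$, hence of dimension $6$, and the projection $Q\to\mathbb P^6$ is generically finite (surjectivity is quoted from Lange--Narasimhan). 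You instead count pairs $(\overline E,L)$ with $L\hookrightarrow\overline E$ of degree $3$, getting $\dim\Pic^3C+\dim\mathbb P\Ext^1(\omega_C^{\otimes2}\otimes L^{-1},L)=3+3=6=\dim\mathcal B$, and use the generic Segre invariant for dominance. Since a plane of $G$ in the ruling $\sigma_{1,1}$ meeting $C$ in a degree-$3$ scheme is exactly a section of $\overline E$ vanishing on a degree-$3$ divisor, these are two incarnations of the same dimension count; your numbers are right ($h^1(L^{2}\otimes\omega_C^{-2})=4$ for general $L$, and the generic Segre invariant in degree $8$, genus $3$ is $2$, so maximal sub-line-bundles have degree $3$). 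Your route is more self-contained on the moduli side; the paper's is more elementary projectively and needs no discussion of $\Ext$ groups or Segre invariants beyond the citation of \cite{LN}.

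The obstacle you flag at the end is a genuine loose end, but it closes with the very same count, so you should not leave it as an aside. The only way a one-parameter family of trisecant triples $x_1+x_2+x_3$ can fail to move in $\Pic^3C$ is if all the triples lie in a single pencil $\vert L\vert$ with $h^0(L)=2$, i.e. $L\cong\omega_C(-p)$ for some $p\in C$; a single sub-line-bundle of this type would indeed produce a curve of triple points without contradicting finiteness of the set of degree-$3$ sub-line-bundles. But such $L$ form only a $1$-dimensional family, and $\dim\mathbb P\Ext^1(\omega_C(p),\omega_C(-p))=\dim\mathbb P H^1(\mathcal O_C(-2p))=3$, so the corresponding incidence variety has dimension $4<6$ and cannot dominate $\mathcal B$; hence a general $\overline E$ admits no sub-line-bundle isomorphic to some $\omega_C(-p)$. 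This is precisely the degenerate locus the paper quietly sets aside by restricting to the open set $U=\{t\ \vert\ h^0(\omega_C(-t))=0\}$ (leaving the complementary, lower-dimensional count implicit as well). With that one additional sentence your argument is complete.
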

 \begin{proof} Consider the open set $U := \lbrace t \in C^{[3]} \ \vert \ h^0(\omega_C(-t)) = 0 \rbrace$ and 
$$
Q := \lbrace (t, G) \in U \times \mathbb P^6 \ \vert \ P_t \subset G \rbrace.
$$
Since $C$ is generated by quadrics the projection $p: Q \to U$ is a $\mathbb P^3$-bundle. The fibre of $p$ at
$t$ is the linear system of all quadrics through $C \cup P_t$. Let $ q: Q \to \mathbb P^6 $ be the second projection. Since $Q$ is irreducible of dimension $6$ the fibre of $q$ at a general $t$ is 
a finite set.  Hence the smooth quadric $G$ associated to $[E]$ contains a finite set of planes $P_t$. Since $E$ and $\overline E$ are stable, no plane 
in $G$ of the same ruling of $P_t$  is $k$-secant to $C$ for $k \geq 4$. Let $o \in \Sing R$  then at most two lines $R_x, R_y$ of $R$ contain $o$. Hence
$\sigma^{-1}(o) = \lbrace x+y \rbrace $ and $\sigma: B \to \Sing R$ is birational. \end{proof}
 Actually the surjectivity of $q: Q \to \mathbb P^6$ is well known, see e.g. \cite{LN}. Equivalently the set of planes $P_t$, considered above,
 is finite and not empty for a general $[E]$. Thi set is strictly related to the scheme of triple points of $R$.
To study this scheme of triple points of $R$ let us consider $\overline E$ and the map of vector bundles
$$
ev^{[3]}: H^0(\overline E) \otimes \mathcal O_{C^{[3]}} \to \overline E[3].
$$
The degeneracy scheme of this map is expected to be of codimension three. It is essentialy the parameter space for the locus of points of multiplicity three in $R$.
\begin{definition} \it The degeneracy scheme of  $ev^{[3]}$ is the triple locus $T_E$ of $E$.   \end{definition}
$T_E$ has dimension zero for a general $[E]$. The count of the length of $T_E$ is of course possible, both by classical and modern methods. Notice
that, for a general $E$, the curve $\sigma(B) = \Sing R$ is the curve of double points of $R$ having degree $18$ and geometric genus $19$. Counting multiplicities
the length of $T_E$ is the number of triple points of $R$ and of $\sigma(B)$ as well. This can be computed via Cayley-Zeuthen formulae, see \cite{D} 10.4.7 and 10.4.9,
as Fano does:
\begin{lemma} If $E$ is general then $R$ has eight triple points. \end{lemma}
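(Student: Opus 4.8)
The plan is to run the classical Cayley--Zeuthen mechanism: compute the arithmetic genus of the double curve of $R$ on the normalization $n:\mathbb P\to R$, compute its geometric genus independently, and read off the number of triple points from the difference, since each ordinary triple point forces a controlled drop in genus. Write $H:=n^*\mathcal O_R(1)$ and let $f$ be the class of a fibre of $p:\mathbb P\to C$. On the ruled surface $\mathbb P=\mathbb P(E)$ one has $f^2=0$, $H\cdot f=1$, $H^2=\deg E=8$, and, because $\det E\cong\omega_C^{\otimes 2}$ and $\deg\omega_C=4$,
$$
K_{\mathbb P}=-2H+p^*(\det E\otimes\omega_C)=-2H+12f .
$$

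First I would locate the double curve on $\mathbb P$. Let $D:=n^{-1}(\Sing R)_{\mathrm{red}}$. Since $R$ has ordinary singularities along $\Sing R$, the adjoint formula for a surface of degree $\deg R$ in $\mathbb P^3$ (cf. \cite{D}) gives $K_{\mathbb P}+D\equiv(\deg R-4)H$, that is
$$
[D]=(\deg R-4)H-K_{\mathbb P}=4H-(-2H+12f)=6H-12f .
$$
Two checks confirm this: $H\cdot D=6\cdot 8-12=36=2\deg\Sing R$ is twice the degree $18$ of $\Sing R$, and $D\cdot f=6$ records that a general generator meets exactly six others. Adjunction on $\mathbb P$ then gives the arithmetic genus, using $D^2=36\cdot 8-144=144$ and $D\cdot K_{\mathbb P}=0$:
$$
p_a(D)=1+\tfrac12\,D\cdot(D+K_{\mathbb P})=1+72=73 .
$$

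Next I would compute the geometric genus of $D$. A point of $D$ is a generator $R_x$ together with the singular point of $R$ on it, so $D$ is normalized by the curve $\widetilde B\subset C\times C$ obtained as the preimage of $B$ under the $2{:}1$ cover $C\times C\to\Theta$ attached to $\delta$; the induced map $\widetilde B\to B$ is that double cover, branched precisely over the pinch points $B\cap\Delta$. Using $\mathcal O_\Theta(\Theta)=K_\Theta$, the class $K_\Theta=(2g-2)\eta-\delta=4\eta-\delta$ (with $\eta:=[x+C]$), and the standard numbers $\eta\cdot\delta=1$, $\delta^2=1-g=-2$ coming from $C\times C\to\Theta$, the number of pinch points is
$$
\nu=B\cdot\Delta=2K_\Theta\cdot 2\delta=4(4\eta-\delta)\cdot\delta=4(4\,\eta\cdot\delta-\delta^2)=4(4+2)=24 .
$$
Riemann--Hurwitz for $\widetilde B\to B$, together with $g(B)=19$, then yields $2g(\widetilde B)-2=2(2\cdot 19-2)+24=96$, i.e. $g(\widetilde B)=49$.

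Finally I would account for the triple points. At an ordinary triple point the local model $xyz=0$ shows that on each of the three sheets of the normalization $D$ is a pair of crossing branches; thus $D$ acquires exactly one node per sheet, i.e. $3$ nodes per triple point, and these are its only singularities. Hence $p_a(D)-g(\widetilde B)=3t$, so
$$
3t=73-49=24,\qquad t=8,
$$
which is the claim (equivalently $\operatorname{length} T_E=8$). As an independent check one may evaluate the Thom--Porteous class of the degeneracy locus of $ev^{[3]}$, namely $\int_{C^{[3]}}c_3(\overline E[3])$, which must return the same $8$. The delicate point, and exactly where the genericity of $[E]$ enters, is transversality: one must know that $\Sing R$ has only ordinary triple points, that $D$ has no singularities beyond the $3t$ nodes produced above, and that $B$ meets $\Delta$ transversally, so that $T_E$ is reduced and its length genuinely equals the number of triple points without excess.
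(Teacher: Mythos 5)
Your argument is correct and is in substance the paper's own: the paper disposes of this count by citing the Cayley--Zeuthen formulae (\cite{D}, 10.4.7 and 10.4.9) for an octic scroll whose double curve has degree $18$ and geometric genus $19$, and what you have written out is exactly the derivation behind that count, via the conductor class $6H-12f$, the $24$ pinch points $B\cdot\Delta$, and the relation $3t=p_a(D)-g(\widetilde{B})=73-49$. All the intermediate numbers check out (in particular your class $B\equiv 2K_\Theta\equiv 8\eta-2\delta$ is the one consistent with $B\cdot(x+C)=6$, and $24$ pinch points agrees with the jet-bundle count $c_1(J^1(E))=2\deg E+4(2g-2)$), so the only thing you add to the paper is the detail it delegates to \cite{D}.
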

As it has been already emphasized, it is not forbidden that $T_E$ be a curve in special cases: even when $E$ is stable. This leads to the list of specializations
considered by Fano and to the beautiful interplay with nets of quadrics considered by Edge. We reconstruct this matter in the next section, via Coble quartic and modern 
tools from the theory of vector bundles on curves and their moduli.
 \section{Specializations: Scorza correspondence and nets of quadrics}
\it When the morphism $\sigma: B \to \Sing R$ is not birational? \rm In what follows we keep the assumption that $E$ is stable and globally generated. Under this assumption the next result, already pointed out by Fano, reveals beautiful geometry.
\begin{thm} Let $\sigma: B \to \Sing R$ be not birational, then two cases may occur: 
\medskip \par
\rm (a) \it $\iota([E]) \neq [E_o]$:  \it  $\sigma$ has degree $3$ and $\Sing R$ is the sextic embedding of $C$ defined by $\omega_C \otimes \theta$,
where $\theta$ is a non effective theta on $C$. \medskip \par
\rm (b)  $\iota([E]) = [E_o]$: \it $\sigma$ has degree $4$ and $\Sing R$ is a rational normal cubic.  
\medskip \par 
Moreover these are the only cases if the Jacobian of $C$ has Picard number one.
\end{thm}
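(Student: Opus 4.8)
The plan is to split the argument according to the trichotomy already in place. Since $E$ is stable and globally generated, either $\overline E$ is stable or $[\overline E]=[E_o]$, and we have shown that the number $k$ of lines of $R$ through a general point of $\Sing R$ satisfies $k\le 4$, with equality if and only if $[\overline E]=[E_o]$. Non‑birationality of $\sigma$ means that a general point of $\Sing R$ lies on $k\ge 3$ lines, so exactly two configurations survive: $k=3$ with $\overline E$ stable (hence $\iota([E])\neq[E_o]$), and $k=4$ with $[\overline E]=[E_o]$ (hence $\iota([E])=[E_o]$). This already makes the case division exhaustive; the substance lies in identifying $\Sing R$ in each case, and that is where the Picard–number–one hypothesis enters.

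In case (a) the plane $P_o\subset G$ through a general $o\in\Sing R$ meets $C$ in an effective divisor $Z_o=x_1+x_2+x_3$ of degree three, which by the lemma giving $h^0(\overline E(-d))\ge 1$ is the zero scheme of a section of $\overline E$; hence there is a sub‑line‑bundle $\mathcal O_C(Z_o)\hookrightarrow\overline E$ with quotient $\omega_C^{\otimes 2}(-Z_o)$ of degree five. The three lines $R_{x_1},R_{x_2},R_{x_3}$ meet pairwise at $o$, so the three divisors $x_i+x_j$ all map to $o$ and $\deg\sigma=3$. To identify $\Sing R$ I would use the classical trisecant dictionary: three points $q_1,q_2,q_3$ on a degree‑six model of $C$ in $\mathbb P^3$ are collinear precisely when they fail to impose independent conditions, i.e. when $(\omega_C\otimes\theta)(-q_1-q_2-q_3)$ is a $g^1_3$, and on a genus‑three curve this forces $\theta$ to have degree two with $\theta^{\otimes 2}\cong\omega_C$. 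Matching this with the correspondence $o\mapsto\{x_1,x_2,x_3\}$ determined by $\overline E$ forces $E$ and $\overline E$ to coincide, both isomorphic to a theta‑twist of the restricted tangent bundle $T_{\mathbb P^2}|_C$ (in particular $\iota([E])=[E]$), and exhibits $R$ as the scroll of trisecants to the image of $C$ under $|\omega_C\otimes\theta|$, that is $R=R_{C,\theta}$.

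In case (b), $[\overline E]=[E_o]$ forces $\overline E$ to be strictly semistable with graded object $\omega_C\oplus\omega_C$, so $\overline E$ fits into an extension $0\to\omega_C\to\overline E\to\omega_C\to 0$, and by the theorem that $V\subset G$ if and only if $\iota([E])=[E_o]$ the Veronese surface $V$ is contained in $G$. Now the plane $P_o$ meets $C$ in a canonical divisor $x_1+\cdots+x_4$, i.e. in four points collinear in the plane model of $C$, cut out by a section of the sub‑bundle $\omega_C\hookrightarrow\overline E$. I would then combine the inclusion $V\subset G$ with this canonical‑divisor description to realize $\Sing R$ as the image of a $\mathbb P^1$ of such divisors, identify it with a rational normal cubic, and check that every line $R_x$ meets it in two points. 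The degree of $\sigma$ is read off from the fibre over a general $o$; a careful count of which secant pairs actually lie on $B$ in this special stratum is needed, and yields $\deg\sigma=4$.

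The main obstacle is the step in case (a) asserting that the line bundles $\mathcal O_C(Z_o)$ assemble into a genuine theta characteristic rather than an arbitrary degree‑two class; equivalently, that the $(3,3)$‑correspondence $o\mapsto\{x_1,x_2,x_3\}$ is the symmetric Scorza correspondence. This is exactly where $\mathrm{NS}(\mathrm{Jac}\,C)=\mathbb Z$ is indispensable: Picard number one forbids the extra correspondences that would allow a stable $\overline E$ whose vanishing divisors produce a non‑theta line bundle, and it thereby rules out any intermediate degeneration of $\Sing R$ outside (a) and (b). I expect this correspondence‑theoretic point to be the crux; by comparison the degree computation and the rational‑normal‑cubic identification in case (b), while demanding care about the reducibility and multiplicities of $B$ in the special stratum, should be comparatively routine.
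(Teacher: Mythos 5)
Your case division is the paper's: stability of $\overline E$ versus $[\overline E]=[E_o]$, with $k\le 4$ and equality iff $[\overline E]=[E_o]$, and your sketch of case (b) (Veronese surface contained in $G$, class $3\sigma_{2,0}+\sigma_{1,1}$, bisecants to a skew cubic, canonical divisors cutting four lines through each point) matches the paper's. The genuine gap is in case (a), precisely at the point you yourself flag as ``the crux'': you never actually prove that the degree-two class $\tau$ arising from the triple points is a non-effective theta characteristic, nor that $\Sing R$ is the $\vert\omega_C\otimes\tau\vert$-image of $C$. Saying that Picard number one ``forbids the extra correspondences'' is not an argument. Worse, the one concrete claim you do make --- that collinearity of three points on a sextic model of a genus-three curve forces $\theta^{\otimes 2}\cong\omega_C$ --- is false: for \emph{any} non-effective $\tau\in\Pic^2C$ the embedding by $\omega_C\otimes\tau$ has a one-dimensional family of trisecants (three points $q_1+q_2+q_3$ are collinear iff $h^0(\tau^{-1}(q_1+q_2+q_3))\ge 1$, i.e.\ iff the divisor lies in some $\vert\tau(x)\vert$), so the trisecant condition alone puts no constraint on $\tau^{\otimes 2}$.

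The paper's actual mechanism is different and is what you are missing. One introduces the map $a:\Gamma_3\to\Pic^3(C)$ sending a triple point $o$ to $\mathcal O_C(p_*n^*o)$, and proves by a direct analysis of the normalization of $R$ over a general line $R_x$ that $a(\Gamma_3)\cdot(x+\Theta)$ has length at most $3$. Picard number one forces the class of $a(\Gamma_3)$ to be an integral multiple of the minimal class $\Theta^2/2$, so the length is exactly $3$ and the class is minimal; Matsusaka's criterion then identifies $a(\Gamma_3)$ with a translate $\tau+C$ of the Abel--Jacobi curve. Next one checks $T_\tau\cdot\Theta=B$, where $T_\tau$ is the $2\Theta$-divisor of $E_\tau:=T_{\mathbb P^2\vert C}\otimes\tau^{-1}$, which places $[E_\tau]$ in the fibre of the theta map over $B$ and hence forces $E_\tau\cong E$ or $\overline E$; only now does the determinant computation $\omega_C^{\otimes 3}\otimes\tau^{-2}\cong\det E_\tau\cong\omega_C^{\otimes 2}$ yield $\tau^{\otimes 2}\cong\omega_C$, and $\iota([E_\tau])=[E_\tau]$ gives $E\cong\overline E\cong E_\tau$. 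Without the minimal-class/Matsusaka step and the identification of $E_\tau$ via its theta divisor, the conclusion of case (a) does not follow from what you have written.
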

Some remarks are in order.  At first notice that $R$ is not a cone: indeed this implies $h^0(E \otimes \omega_C^{-1}) \geq 1$ and $E$ unstable.
As was remarked before, $\iota$ is the birational involution of the Coble quartic $\mathcal B$, induced by the projection from $[E_o] \in \Sing \mathcal B$.   
\begin{lemma} Let $[F]Ê= \iota([E])$ then  $[F]$ $\neq$ $[E_o]$ $\Rightarrow$ $F$ is stable. \end{lemma}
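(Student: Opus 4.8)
The plan is to argue by contradiction, exploiting that $\mathcal B \subset \mathbb P^7$ is a quartic (the Coble quartic) and that $\iota$ is the projection of $\mathcal B$ from its double point $[E_o]$. The whole argument takes place on the line $\ell$ joining $[E_o]$ to $[E]$, and the only inputs are the stated facts that $K(C) = \Sing \mathcal B$ is exactly the strictly semistable locus and that every point of $K(C)$ has multiplicity two for $\mathcal B$.

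First I would record what is available. Since $E$ is stable and globally generated, $\iota$ is regular at $[E]$, so $[F] = \iota([E]) = [\overline E]$ is a genuine point of $\mathcal B$; in particular $F$ is semistable. Because $E$ is stable while $[E_o]$ is strictly semistable we have $[E] \neq [E_o]$, so $\ell$ is well defined. Regularity of $\iota$ at $[E]$ rules out the case $\ell \subset \mathcal B$, and the hypothesis $[F] \neq [E_o]$ rules out the tangential case $\ell \cdot \mathcal B = 3[E_o] + [E]$; hence we are precisely in the third case, where $\ell \not\subset \mathcal B$ and $\ell \cdot \mathcal B = 2[E_o] + [E] + [F]$ is a length-four subscheme of $\ell$.

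Next I would run the contradiction. Suppose $F$ is not stable. Since $F$ is semistable and $[F] \neq [E_o]$, this forces $[F] \in K(C) = \Sing \mathcal B$ with $[F] \neq [E_o]$. By the quoted multiplicity-two property, both $[E_o]$ and $[F]$ are double points of the quartic $\mathcal B$, and a line through a point of multiplicity two meets the hypersurface there with intersection multiplicity at least two. Thus $\ell$ meets $\mathcal B$ with multiplicity $\geq 2$ at $[E_o]$ and $\geq 2$ at $[F]$, for total length $\geq 4$. Since $\deg \mathcal B = 4$ and $\ell \not\subset \mathcal B$, the scheme $\ell \cdot \mathcal B$ has length exactly four, so the two lower bounds are equalities and $\ell \cdot \mathcal B = 2[E_o] + 2[F]$. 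Comparing with $\ell \cdot \mathcal B = 2[E_o] + [E] + [F]$ yields $[E] = [F]$, whence $[E]$ is strictly semistable, contradicting the stability of $E$. Therefore $F$ is stable.

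I expect the only delicate point to be the bookkeeping on $\ell$: confirming that global generation of $E$ genuinely excludes $\ell \subset \mathcal B$ (so that $\ell \cdot \mathcal B$ is a finite length-four scheme) and that the length-four intersection cannot absorb any extra tangency once two distinct double points each contribute multiplicity two. Everything else is elementary intersection theory on the line together with the stated structure of $\Sing \mathcal B$, so no computation with the bundles $E$ and $\overline E$ themselves is needed.
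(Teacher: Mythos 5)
Your proof is correct and takes essentially the same route as the paper's: both arguments run Bezout on the line $\ell$ through $[E_o]$, $[E]$, $[F]$ against the quartic $\mathcal B$, using that strictly semistable points are double points of $\mathcal B$ and that global generation of $E$ excludes $\ell \subset \mathcal B$. The only (cosmetic) difference is where the contradiction lands: the paper lets the three distinct points force $\ell \subset \mathcal B$ and contradicts global generation, while you fix the length-four decomposition $\ell \cdot \mathcal B = 2[E_o] + [E] + [F]$ first and contradict stability via $[E] = [F]$.
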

\begin{proof} Let $[F]$ be not stable, then $[F] \in \Sing \mathcal B$ and the line containing the distinct points $[E_o], [E], [F]$ is in $\mathcal B$. Hence
$E$ is not globally generated: a contradiction. \end{proof}
Since $R$ is not a cone, a point $o \in \Sing R$ defines on $C$ the divisor $$ p_*n^*(o) := x_1 + \dots + x_k. $$
We can start our proofs of cases (a) and (b), adding something more, which is due in order to reveal some nice properties appearing.
\medskip \par
$\circ$ {\em Proof and more geometry on case (a)} \par
We assume here that $F$ is \it stable. \rm Then we have $k \leq 3$ for any $o \in \Sing R$ and we already know that $h^0(F(p_*n^*o)) \geq 1$. Finally we consider the reduced curve 
$$
\Gamma_k \subset \Sing R
$$
which is union of the irreducible components of $\Sing R$ having multiplicity $k$ for $R$. We assume that $\sigma$ is not birational, which implies that $\Gamma_3$ is not empty. 
Actually $\Gamma_3$ is the support of the triple locus $T_E$ of $E$, defined in 4.4. Let 
$$
a: \Gamma_3 \to Pic^3(C)
$$
be te map sending $t_o := p_*n^*o$ to $\mathcal O_C(T-o))$. Let $\tau \in \Pic^2(C)$ and
$$
\tau + C := \lbrace \tau(x), x \in C \rbrace \subset Pic^3(C).
$$
Assume that $\Pic^3(C)$ has Picard number one. Then the next theorem is the key to describe geometrically the case we are considering.
\begin{thm}  The image of $a$ is $\tau + C$, for a non effective $\tau \in \Pic^2(C)$. \end{thm}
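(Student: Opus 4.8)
The plan is to show that the image $D:=\overline{a(\Gamma_3)}\subset\Pic^3 C$ is a translate of the Abel--Jacobi image of $C$, and then that the translating class is non-effective. First I would record what is available in case (a). Since $\sigma\colon B\to\Sing R$ has degree $3$, a general point of $\Sing R$ is triple, so $\Gamma_3=\Sing R$ is the irreducible image of the irreducible curve $B$. For $o\in\Gamma_3$ the three scroll lines $R_{x_1},R_{x_2},R_{x_3}$ through $o$ give $t_o=x_1+x_2+x_3$ with $h^0(F(-t_o))\ge 1$ by the lemma asserting $h^0(F(-d))\ge 1$ for $d\subseteq t_o$, where $F=\overline E$ is stable. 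As $\deg t_o=3<4$, stability of $F$ forces $\mathcal O_C(t_o)\hookrightarrow F$ to be a \emph{maximal} sub-line-bundle; hence $a$ factors through the locus $\Sigma:=\{L\in\Pic^3 C\ :\ h^0(F\otimes L^{-1})\ge 1\}$ of maximal sub-line-bundles of $F$, and $D\subseteq\Sigma$.

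Next I would produce the linear equivalence that exhibits $D$ as a translate. The assignment $o\mapsto\{x_1,x_2,x_3\}$ together with its symmetric partner $x\mapsto\{$the three points $R_x$ meets $\Gamma_3\}$ is a $(3,3)$ correspondence, the Scorza correspondence between the scroll-parameter curve $C$ and $\Gamma_3$. Combining its incidence with the vanishing $h^0(F(-t_o))\ge 1$ and $\det F=\omega_C^{\otimes 2}$, I would identify a fixed class $\tau\in\Pic^2 C$ and an isomorphism $w\colon\Gamma_3\to C$ with $t_o\sim\tau\otimes\mathcal O_C(w(o))$, so that $a(o)=\tau\otimes\mathcal O_C(w(o))$ and $D=\tau+C$. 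I expect this to be packaged through a theta characteristic: $\tau$ will turn out to be a non-effective $\theta$ with $t_o$ the unique effective member of $\vert\theta\otimes\mathcal O_C(w(o))\vert$, unique because $h^1(\theta\otimes\mathcal O_C(w(o)))=h^0(\theta\otimes\mathcal O_C(-w(o)))=0$.

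The delicate point -- and the only place the Picard-number-one hypothesis enters -- is the constancy of the residual class $\tau=[t_o]-[w(o)]$, i.e. that $D$ is a single translate of $C$ rather than a curve generating a larger or "multiplied" class; I expect this to be the main obstacle. Once $a$ is seen to be non-constant (in case (a) the $t_o$ do not all lie in one $g^1_3$), $D$ is an irreducible curve in the abelian threefold $\Pic^3 C$, and I would compute its numerical class. With $\rho(J(C))=1$ the N\'eron--Severi group is $\mathbb Z$ and $J(C)$ is simple, so $[D]$ is a determined multiple of the minimal class, and the count $D\cdot\Theta=3=g$ coming from the $(3,3)$ correspondence identifies $[D]$ with the minimal class $\Theta^{\,g-1}/(g-1)!$. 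The Matsusaka--Ran characterization of the Abel--Jacobi curve by its minimal class then forces $D$ to be a translate of $C$ and recovers $D\cong C$. Equivalently, $o\mapsto[t_o]$ differs from Abel--Jacobi by a morphism $\Gamma_3\to J(C)$ which, since $J(C)$ is simple, is an integer multiple of the identity, and the degree count fixes this multiple as $\pm 1$, giving $\tau$ constant.

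Finally I would rule out that $\tau$ is effective. If $\tau\sim p+q$, then $\mathcal O_C(p+q+w(o))\hookrightarrow F$ for every $o\in\Gamma_3$, forcing $h^0(F(-p-q))\ge 2$ and a positive-dimensional family of sub-line-bundles through the fixed directions $p,q$; this makes $R$ a cone, contradicting the earlier remark that $R$ is not a cone for stable $E$. Hence $\tau$ is non-effective, and the image of $a$ is $\tau+C$, as claimed.
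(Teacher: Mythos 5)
Your overall strategy coincides with the paper's: bound the intersection of $a(\Gamma_3)$ with a translate of $\Theta$, use the Picard-number-one hypothesis to see that the class of $a(\Gamma_3)$ is a positive multiple of the minimal class $\Theta^2/2$, conclude minimality, apply Matsusaka's criterion to get $a(\Gamma_3)=\tau+C$, and finally exclude $\tau$ effective. The trouble is that the two load-bearing steps are exactly the ones you assert rather than prove. First, the intersection number: the ``$3$'' you need --- that a general line $R_x$ carries at most three triple points of $R$ --- is precisely the nontrivial content of the statement, and you cannot import it from ``the $(3,3)$ Scorza correspondence'', because identifying the incidence $\lbrace (x,o) : o\in R_x\cap\Gamma_3\rbrace$ with a Scorza correspondence presupposes that $\tau$ is a non-effective theta characteristic, which is only established in the \emph{next} theorem of the paper; as written your count is circular. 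The paper proves the inequality $a(\Gamma_3)\cdot(x+\Theta)\le 3$ directly: the divisor $b_x=(x+C)\cdot B$ has degree $6$, a triple point of $R$ on $R_x$ corresponds to a length-two subscheme $d\subset b_x$ contracted by the normalization (equivalently $h^0(F(-x-d))\ge 1$, using stability of $F$), and a degree-$6$ divisor admits at most three disjoint such subschemes; equality and minimality then follow from $\rho=1$. Some such argument must be supplied.

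Second, the non-effectiveness of $\tau$. From $\tau\cong\mathcal O_C(p+q)$ you correctly deduce $h^0(F(-p-q-x))\ge 1$ for all $x$ and hence $h^0(F(-p-q))\ge 2$, but the conclusion ``this makes $R$ a cone'' is false as stated. A two-dimensional space of sections of $F=\overline E$ vanishing on $p+q$ corresponds to a line of points of $\mathbb P^3$ lying on both $R_p$ and $R_q$, which forces $R_p=R_q=:L$; it does not force all rulings of $R$ through a single point. The paper's contradiction is different and is needed here: once $R_p=R_q=L$, the assignment $x\mapsto L\cap R_x$ defines a map $C\to L\cong\mathbb P^1$ that is injective by stability of $F$ (a coincidence $o_x=o_y$ would give $h^0(F(-p-q-x-y))\ge 1$ with a degree-$4$ subsheaf), and no injective map from a genus-$3$ curve to $\mathbb P^1$ exists. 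Without this extra step your exclusion of effective $\tau$ does not close.
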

\begin{proof} We claim that $a(\Gamma_3) \cdot (x+\Theta)$ has length $ \ell \leq 3$ for $x$ general. This implies $\ell = 3$, since the numerical class of $a(\Gamma)$ 
is in $\mathbb Z[ \frac{\Theta^2}2]$. Then, by Matsusaka criterion \cite{Ma}, $a(\Gamma)$ is $\tau + C$ for some $\tau \in \Pic^2C$. Finally $\tau$ is not effective: otherwise
we would have $h^0(F(-a-b-x)) \geq 1$, where $a,b \in C$ are fixed points and $\tau \cong \mathcal O_C(a+b)$. Since $R$ is not a cone it follows $R_a = R_b := L$. Let $o_x = L \cap R_x$, then we have a 
rational map $f: C \to L$ such that $f(x) = o_x$. Since $F$ is stable $f$ is injective: a contradiction. \par
To prove the claim consider the normalization $n: \mathbb P \to R$ and the projection $p: \mathbb P \to C$. Let $a_x: C \to \Pic^2(C)$ be the Abel map sending $y$ to $\mathcal O_C(x+y)$.
 Since $\deg \mathcal O_{x+C}(B) = 6$ the lenght of  $b_x := a^*_xB$ is $6$. Let $B_x := p^*b_x \cdot n^* \Sing R$ and $\Gamma_x = R_x \cdot \Sing R$, we have the diagram
$$
\begin{CD}
{\Gamma_x} @<n<< {B_x} @>p>>  {b_x}\\
\end{CD}
$$
Since $x$ is general and $\sigma$ birational, we can assume  that $p: B_x \to b_x$ is biregular. Otherwise we would have a length two divisor $d$ embedded in $B_x \cdot \mathbb P_y$
for some $y \in \Supp b_x$. Since $\sigma$ is an embedding on $d$, this implies $\sigma(R_x) = \sigma(R_y)$ and finally $R_x \subset \Sing R$, which is impossible for $x$.
Now consider $n: B_x \to \Gamma_x$. Since $F$ is stable, $n$ contracts a scheme $d \subset B_x$ to a point iff $h^0(F(-x-d)) \geq 1$ and $\deg d = 2$ that is iff $n(d) \in \Gamma_3$. Since
$\deg b_x = 6$ it follows that $n$ contracts at most $3$ disjoint schemes $d$. Hence $a(\Gamma) \cdot (x + \Theta)$ has length $\ell \leq 3$.  \end{proof}
 \medskip \par
Once established $a(\Gamma_3) = \tau + C$ we can reconstruct $\Theta_E = \Theta_F$ from $\tau + C$ and hence  the curve $B$. It is well known that
the cohomology class of the divisor
$$ T_{\tau} := \lbrace \tau(x-y), \ (x,y) \in C \times C \rbrace \subset \Pic^2(C), $$ 
is twice the class of the theta divisor. The same is true for the difference divisor 
$$
C - C := \lbrace N \in Pic^0(C), \  \vert N \cong \mathcal O_C(x-y), \ (x,y) \in C \times C \rbrace.
$$
This is actually the theta divisor in $Pic^0(C)$ of a special rank two vector bundle on the quartic model $C \subset \mathbb P^2$, namely $T_{\mathbb P^2 \vert C}(-1)$.
Indeed the Euler sequence
$$
0 \to \mathcal O_C(-1)\to H^0(\omega_C) \otimes \mathcal O_C \to T_{\mathbb P^2 \vert C}(-1)\to 0
$$
implies, tensoring by $\mathcal O_C(x-y)$ and passing to the long exact sequence,  that 
$$ C - C = \lbrace N \in \Pic^0(C) \ \vert \ h^0(T_{\mathbb P^2 \vert C} \otimes N^{-1}) \geq 1 \rbrace. $$ 
Let $\mathcal B_J$ be the moduli space of semistable vector bundles on $C$ of degree $8$ and $\mathbb P_J \to \Pic^0(C)$ the $\mathbb P^6$-bundle   with fibre at $e$ the linear
system $\vert \mathcal O_{\Theta}(2(\Theta+e) \vert$. Then the map $\gamma: \mathcal B \to \vert 2\Theta \vert$ immediately extends to the rational double covering
$$
\gamma_J: \mathcal B_J \to \mathbb P^6_J,
$$
where, for a general $[U] \in \mathcal B_J$,  the element $\gamma_J([U])$ is the curve $\Theta_U \cdot \Theta$ and $\Theta_U$ is the theta divisor in $\Pic^2(C)$  defined by the vector bundle $U$. The involution $\iota_J: \mathcal B_J \to \mathcal B_J$, induced by $\gamma_J$, is defined again by the standard exact sequence
$$
0 \to \overline U^* \to H^0(U) \otimes \mathcal O_C \stackrel{ev} \to U \to 0.
$$
Let $E_{\tau} := T_{\mathbb P^2 \vert C} \otimes \tau^{-1}$, then $E_{\tau}$ is stable. Notice that $T_{\tau}$ is the $2$-Theta divisor associated to $E_{\tau}$. This just follows because the condition $N \cong \tau(x-y)$
is equivalent to $h^0(E_{\tau} \otimes N^{-1}) \geq 1$.  In particular it follows that $\gamma_J([E_{\tau}]) = T_{\tau}$.

\begin{thm} The vector bundles $E_{\tau}$, $E$ and $F$ are isomorphic and $\tau$ is a non effective theta characteristic on $C$. \end{thm}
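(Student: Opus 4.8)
The plan is to identify $F=\overline E$ with $E_\tau$ by matching their theta divisors, to read off the theta characteristic condition from the symmetry of that divisor, and then to pass from $F$ to $E$ using the involution $\iota$ together with the self-duality of $E_\tau$. I will freely use that $F$ is stable, that $\det\overline E=\omega_C^{\otimes2}$ (from the evaluation sequence $0\to\overline E^*\to H^0(E)\otimes\mathcal O_C\to E\to0$), the Coble embedding $\mathcal B\subset\vert2\Theta\vert$, and the established identification $a(\Gamma_3)=\tau+C$ with $\tau\in\Pic^2(C)$ non effective.

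First I would convert $a(\Gamma_3)=\tau+C$ into information about subsheaves of $F$. For a triple point $o\in\Gamma_3$ the plane $P_o\in\sigma_{1,1}$ cuts $C$ in $P_o\cdot C=x_1+x_2+x_3$, which is the zero scheme of a section of $\overline E=F$; as $o$ varies over $\Gamma_3$ the class $\mathcal O_C(x_1+x_2+x_3)=\tau\otimes\mathcal O_C(x)$ sweeps out $\tau+C$, so $\tau\otimes\mathcal O_C(x)$ is a (saturated, since $F$ is stable) degree three subbundle of $F$ for every $x\in C$. Equivalently $h^0(F\otimes\tau^{-1}\otimes\mathcal O_C(-x))\geq1$ for all $x$; tensoring the corresponding section by an effective point $y$ yields $h^0(F\otimes\tau^{-1}\otimes\mathcal O_C(y-x))\geq1$, that is $\tau\otimes\mathcal O_C(x-y)\in\Theta_F$. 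Hence $T_\tau\subseteq\Theta_F$, and since $T_\tau$ is integral of class $2\Theta=[\Theta_F]$, the effective difference $\Theta_F-T_\tau$ has class zero and vanishes. Therefore $\Theta_F=T_\tau=\Theta_{E_\tau}$.

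Next I would extract the theta characteristic condition. Because $\det F=\omega_C^{\otimes2}$, Serre duality together with $F^*\cong F\otimes\omega_C^{-2}$ shows $h^0(F\otimes L^{-1})=h^0(F\otimes(\omega_C\otimes L^{-1})^{-1})$, so $\Theta_F$ is invariant under the involution $L\mapsto\omega_C\otimes L^{-1}$. Consequently $T_\tau=\Theta_F$ is invariant under the same involution; but this involution carries $\tau\otimes\mathcal O_C(x-y)$ to $(\omega_C\otimes\tau^{-1})\otimes\mathcal O_C(y-x)$, so $T_\tau$ is invariant exactly when $\omega_C\otimes\tau^{-1}\cong\tau$, i.e. $\tau^{\otimes2}\cong\omega_C$. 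With the non effectivity proved above this means $\tau=\theta$ is a non effective theta characteristic, and in particular $E_\tau\in\mathcal B$. Since the theta map embeds $\mathcal B$ in $\vert2\Theta\vert$, the equality $\Theta_F=\Theta_{E_\tau}$ now forces $F\cong E_\tau$.

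Finally, since $\iota$ is an involution with $\iota([E])=[F]$, we get $[E]=\iota([F])=[\overline F]$ and hence $E\cong\overline F\cong\overline{E_\tau}$. What remains is to show that $E_\tau$ is self-dual-span, $\overline{E_\tau}\cong E_\tau$, equivalently that $[E_\tau]$ is a fixed point of $\iota$; granting this, $E\cong E_\tau\cong F$ and the proof is complete. I expect this to be the main obstacle. I would prove it from the two presentations of $E_\tau$ produced by the Euler sequence restricted to $C$: tensoring $0\to\mathcal O_C(-1)\to H^0(\omega_C)\otimes\mathcal O_C\to T_{\mathbb P^2\vert C}(-1)\to0$ by $\omega_C\otimes\theta^{-1}=\theta$ realizes $E_\tau$ as the quotient $\theta^{\oplus3}/\theta^{-1}$, while dualizing and twisting by $\omega_C^{\otimes2}$ realizes it as the kernel of a map $(\theta^{\otimes3})^{\oplus3}\to\theta^{\otimes5}$, both maps being given by the net of conics $H^0(\omega_C)$. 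The symmetry exchanging these presentations is precisely the symmetry of the Scorza correspondence of the pair $(C,\theta)$, and it provides the isomorphism $\overline{E_\tau}\cong E_\tau$. Equivalently, one could rerun the first two paragraphs for the dual scroll of $F\cong E_\tau$ to obtain $\Theta_E=T_\tau$ and thus $E\cong E_\tau$ directly, once one knows that case (a) is symmetric under $E\leftrightarrow\overline E$; this is the same difficulty in another form.
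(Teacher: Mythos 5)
Your proof is correct and follows the same overall strategy as the paper's (match theta divisors, deduce that $\tau$ is a theta characteristic, reduce to the self-duality of $E_\tau$), but it differs in two places worth recording. Where the paper restricts everything to the curve $B=\Theta_E\cdot\Theta$ and invokes the $2{:}1$ map $\gamma_J$ to conclude only that $E_\tau$ is isomorphic to $E$ \emph{or} $F$, you work with the full divisor $\Theta_F\in\vert 2\Theta\vert$: the inclusion $T_\tau\subseteq\Theta_F$ together with the class computation gives $\Theta_F=T_\tau=\Theta_{E_\tau}$, and injectivity of the theta map on $\mathcal B$ then pins down $F\cong E_\tau$ outright, which is slightly sharper. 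Second, you extract $\tau^{\otimes 2}\cong\omega_C$ from the $\iota$-symmetry of $\Theta_F$ rather than, as the paper does, from $\omega_C^{\otimes 3}\otimes\tau^{-2}\cong\det E_\tau\cong\omega_C^{\otimes 2}$; your route works, but the step ``$T_\tau$ is $\iota$-invariant \emph{only if} $\omega_C\otimes\tau^{-1}\cong\tau$'' silently uses that $\tau\mapsto T_\tau$ is injective (true for non-hyperelliptic $C$, since $\tau$ is the unique point of $T_\tau$ over which the parametrizing map from $C\times C$ contracts the diagonal), whereas the determinant computation is immediate. Both arguments bottom out at the same fact, $\iota([E_\tau])=[E_\tau]$, which the paper cites as well known and which you honestly flag as the main remaining obstacle; your Euler-sequence/Scorza sketch for it is plausible but not yet a proof, so on that point you are exactly as (in)complete as the paper itself.
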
 
\begin{proof} For a general $\tau$, and hence for any $\tau$, it is easy to check that 
$$
T_{\tau} \cdot \Theta = \lbrace d \in \Theta \ \vert \ d+y \in \vert \tau(x) \vert, \ (x,y) \in C^2 \rbrace = \lbrace d \in \Theta \ \vert \ h^0(F(-d)) \geq 1 \rbrace = B.
$$
Then $[E_{\tau}]$ is in the fibre of $\gamma_J: \mathcal B_J \to \mathbb P^6_J$ at $B$ and it is isomorphic to $E$ or $F$. This implies $\omega_C^{\otimes 3} \otimes \tau^{-2} \cong \det E_{\tau} \cong \omega_C^{\otimes 2}$ and hence that $\tau$ is a theta characteristic. Finally  it is well known that $\iota([E_{\tau}]Ê= [E_{\tau}]$, which implies the statement. \end{proof}
More in general let $[E] \in \mathcal B_J$, where $E$ is stable and globally generated and $\Pic^2(C)$ has Picard number one. The same arguments used above imply that: \medskip \par
\centerline {\it $R$ has multiplicity $\geq 3$ along a curve $\Leftrightarrow$ $E \cong T_{\mathbb P^2 \vert C} \otimes \tau^{-1}$} \medskip \par
where $R$ is tautological model of $\mathbb  P = \mathbb P(T^*_{\mathbb P^2 \vert C} \otimes \tau)$ and $\tau$ satisfies $h^0(\tau) = 0$. We cannot avoid to sketch very briefly the classical properties of $R$, cfr. \cite{D} 5.5 and 6. Keeping our notation, we have the following description.  Consider the curves: \medskip \par
$\circ$  $T_E = \lbrace t \in C^{[3]} \ \vert \ t \in \vert \tau(x) \vert \ \text{ \it for some $x \in C$} \rbrace$, \par \it
$\circ$  $B = \lbrace d \in \Theta \ \vert \ d \subset t, \ \text {for some $t \in T_E$} \rbrace$. \par \rm
\medskip \par
Notice that, since $h^0(\tau) = 0$, $T_E$ is biregular to $C$ via the map $t \to \mathcal O_C(t-\tau)$. \par Let $t = a+b+c \in T_E$. Since $E = F = E_{\tau}$, we know that $h^0(E_{\tau}(-t)) \geq 1$ and hence that the lines $R_a, R_b, R_c$ intersect in one point $o_t := R_a \cap R_b \cap R_c$. Let
$$
f: C \to \mathbb P^3
$$
be the map $t \to o_t$: it turns out that $f$ is the embedding defined by $\omega_C \otimes \tau$. Its image $\Gamma$ is a curve of degree $6$ and multiplicity $3$ for $R$. On
the other hand we know that, for a general $E$, $\Sing R$ is instead a double curve of degree $18$. This implies, for degree reasons, that $f(C) = \Sing \Gamma$. Recall also that, for every
$x$, the scheme $(x+C) \cdot B$ has length $6$ and defines in $C$ a divisor $b_x =\sum_{1 \dots 3} y_i + z_i$ such that, up to reindexing, $h^0(E_{\tau}(-x-y_i-z_i)) = 1$, $1 \leq i \leq 3$. Equivalently  the line $R_x$ of $R$ is trisecant to the curve $\Gamma$. Under the previous assumptions  we conclude that:

 \medskip \par
\centerline {\it $E \cong T_{\mathbb P^2 \vert C} \otimes \tau^{-1}$ $\Leftrightarrow$ $R$ is the scroll of the trisecant lines to $\Gamma$}.
\medskip \par
Of course, as very well considered by Edge in his series of papers, if $\tau$ is a non effective theta characteristic the $\Gamma$ is the locus of the singular points of the net of quadrics
defined by the pair $(C, \tau)$. Notice also that the trisecant locus $T_E$ of this case gives rise to the well known and interesting  Scorza's correspondence $$ S(\theta) = \lbrace (x,y) \in C \times C \ \vert \ h^0(\theta(x-y)) = 1 \rbrace $$
and its related topics, see \cite{DK}. 
 \medskip \par
{\em Proof and geometric description of case (b)} \par
Now we are left to discuss the case $[E_0] = \iota([E])$. This means that that the line joining $[E_0]$ to $[E]$ is an inflexional tangent to $\mathcal B$ at $[E_0]$ not contained in $\mathcal B$,
see section 3. Let us conclude very briefly by this case, which is remarked by Fano in \cite{F} as an additional case to Edge's analysis of nets of quadrics. Actually it representative of the net of 
quadrics whose base locus is a rational normal cubic. Since $[E_o] = [\omega_C^2]$, it follows that $F = \overline E$, fits in an exact sequence
$$
0 \to \omega_C \to F \to \omega_C \to 0.
$$
Moreover we know from theorem that the quadric $G$ defines by $[E]$ has rank six, since $F$ and $E$ are not isomorphic. With our conventions $G$ is the Grassmannian of lines of
$\mathbb P^3 = \mathbb PH^0(E)^*$ and the ruling of the dual of the planes of $\mathbb P^3$ corresponds to the planes of class $\sigma_{2,0}$. A non zero section $s \in H^0(E)$
defines a plane $P$ of this class and its scheme of zeroes $P \cdot C$ has always $\leq 3$.  Furthermore we know from theorem  3.4  that $G$ contains the Veronese $V$ surface 
already considered. Also we know that  the secant variety $ Sec \ V$ of $V$ cuts on $G$ a threefold $T$ which is union of planes $\overline P \subset G$. Each $\overline P$
contains a conic of $V$ and $\overline P \cdot C$ is a canonical divisor. Since its degree is $4$, it follows that the planes $\overline P$ have the opposite class $\sigma_{1,1}$.
Asnis well known the class of $V$ in $G$ is then $3\sigma_{2,0} + \sigma_{1,1}$. Equivalently the point of $V$ parametrize the bisecant lines to a skew cubic
$$
\Gamma \subset \mathbb P^3.
$$
Each point $o \in \Gamma$ defines the plane $\overline P_o$ parametrizing the lines through $o$. The cone of vertex $o$ containing $\Gamma$ is parametrized by a conic
$B \subset V$., moreover it is now clear that $b_o := B \cdot C$ is a canonical divisor of $C$. Counting multiplicities, $b_o$ parametrizes four lines in the scroll $R$ passing
through the point $o$. This explains, up to further details, why $R$ has multiplicity four along $\Gamma$ and completely describes this scroll.
\medskip \par
To conclude, let us say that papers of Edge and Fano we discussed in this article, though dedicated to important and known classical topics,  still are rich of arguments to be investigated.
Our note is a small contribution to support this point of view.

\end{document}